\newtheorem{theorem}{Theorem} [section]
\newtheorem{lemma}[theorem]{Lemma}
\newtheorem{proposition}[theorem]{Proposition}
\newtheorem{remark}[theorem]{Remark}
\newtheorem{definition}[theorem]{Definition}
\newtheorem{corollary}[theorem]{Corollary}
\DeclareMathOperator*{\intt}{\int}
\DeclareMathOperator*{\supp}{supp}
\newcommand{\I}{\hspace{0.5mm}\text{I}\hspace{0.5mm}}
\newcommand{\noi}{\noindent}
\newcommand{\Z}{\mathbb{Z}}
\newcommand{\R}{\mathbb{R}}
\newcommand{\T}{\mathbb{T}}
\let\Re=\undefined\DeclareMathOperator*{\Re}{Re}
\newcommand{\F}{\mathcal{F}}
\newcommand{\al}{\alpha}
\newcommand{\dl}{\delta}
\newcommand{\Dl}{\Delta}
\newcommand{\eps}{\varepsilon}
\newcommand{\ld}{\lambda}
\newcommand{\s}{\sigma}
\newcommand{\Si}{\Sigma}
\newcommand{\ft}{\widehat}
\newcommand{\wt}{\widetilde}
\newcommand{\cj}{\overline}
\newcommand{\dt}{\partial_t}
\renewcommand{\o}{\omega}
\newcommand{\les}{\lesssim}
\newcommand{\ges}{\gtrsim}
\newcommand{\jb}[1]
{\langle #1 \rangle}
\newcommand{\ind}{\mathbf 1}
\renewcommand{\S}{\mathcal{S}}
\newcommand{\M}{\mathcal{M}}
\newcommand{\N}{\mathbb{N}}
\newcommand{\NN}{\mathcal{N}}
\renewcommand{\I}{\mathcal{I}}
\newcommand{\TT}{\mathcal{T}}
\newcommand{\BT}{{\bf T}}
\tikzset{
	dot/.style={circle,fill=black,draw=black,inner sep=0pt,minimum size=0.5mm},
	>=stealth,
	}
\tikzset{
	ddot/.style={circle,fill=white,draw=black,inner sep=0pt,minimum size=0.8mm},
	>=stealth,
	}
\tikzset{decision/.style={ 
        draw,
        diamond,
        aspect=1.5
    }}
\tikzset{dia2/.style
={diamond,fill=white,draw=black,inner sep=0pt,minimum size=1mm},
	>=stealth,
	}
\tikzset{dia/.style
={star,fill=black,draw=black,inner sep=0pt,minimum size=1mm},
	>=stealth,
	}
\def\DeclareSymbol#1#2#3{\expandafter\gdef\csname MH@symb@#1\endcsname{\tikz[baseline=#2,scale=0.15]{#3}}}
\def\<#1>{\csname MH@symb@#1\endcsname}
\newtheorem*{ackno}{Acknowledgments}
\numberwithin{equation}{section}
\numberwithin{theorem}{section}
\begin{document}

\title[On norm inflation with general initial data
for the  cubic NLS]
{A remark on norm inflation 
with general initial data
for the cubic nonlinear Schr\"odinger equations
in negative Sobolev spaces}

\author
{Tadahiro Oh}

\address{
Tadahiro Oh, School of Mathematics\\
The University of Edinburgh\\
and The Maxwell Institute for the Mathematical Sciences\\
James Clerk Maxwell Building\\
The King's Buildings\\
Peter Guthrie Tait Road\\
Edinburgh\\ 
EH9 3FD\\
 United Kingdom}

\email{hiro.oh@ed.ac.uk}

\subjclass[2010]{35Q55}

\keywords{nonlinear Schr\"odinger equation;  ill-posedness; norm inflation}

\begin{abstract}
In this note, we 
consider  the ill-posedness issue for the cubic nonlinear Schr\"odinger equation.
In particular, we prove norm inflation
based at every initial condition
in negative Sobolev spaces
below or at the scaling critical regularity.

\end{abstract}

\maketitle

\section{Introduction}

We consider 
the  cubic  nonlinear Schr\"odinger equation (NLS): 
\begin{align}
\begin{cases}
i \dt u -    \Dl u \pm  |u|^{2}u  = 0\\
u|_{t = 0} = u_0, 
\end{cases}
\ (x, t) \in \M\times \R, 
\label{NLS0}
\end{align}

\noi
where $\M = \R^d$ or $\T^d$ with $\T = \R/\Z$.
The equation \eqref{NLS0} appears
in various physical settings: nonlinear optics, 
fluids, plasmas, 
and quantum field theory, 
and 
has been studied extensively
from both theoretical and applied points of view.
 See \cite{SULEM, Caz, TAO} for a general review on the subject.

Our main goal in this paper
is to study  the ill-posedness
issue of  \eqref{NLS0} in negative Sobolev spaces. 
We first recall the following scaling invariance for \eqref{NLS0};
 if $u(x, t)$ is a solution to \eqref{NLS0}
on $\R^d$, then
$u^\ld(x, t) := \ld^{-1} u (\ld^{-1}x, \ld^{-2}t)$
is also a solution to \eqref{NLS0} on $\R^d$ with scaled initial data.
Associated to this scaling invariance, 
we have the critical Sobolev regularity given by $s_\text{crit} := \frac d2 -1$.
While there is no scaling symmetry in the periodic setting, 
the heuristics provided by the scaling argument also plays an important role.
 It is commonly conjectured that an evolution equation is 
 well-posed  in $H^s$ for $s > s_\text{crit}$,
 while it is ill-posed for $s < s_\text{crit}$.
In fact, we have a good well-posedness theory 
\cite{GV, Tsutsumi, CW, BO1, HTT, Wang}
of \eqref{NLS0}
for $s \geq s_\text{crit}$, at least locally in time, 
with the exception of the $d = 1$ case.
See Remark \ref{REM:Wick1} below
for a brief discussion on the situation when $d = 1$.

On the other hand, 
\eqref{NLS0}
is known to be ill-posed on both $\R^d$ and $\T^d$ when $s < s_\text{crit}$.
 When $d = 1$, it is also ill-posed at the critical regularity 
 $s = s_\text{crit} = - \frac 12$.
More precisely, 
we have the following norm inflation;
given any $\eps > 0$, 
there exist a solution $u$ to \eqref{NLS0} on $\M$
and $t  \in (0, \eps) $ such that 
\begin{align}
 \| u(0)\|_{H^s(\M)} < \eps \qquad \text{ and } \qquad \| u(t)\|_{H^s(\M)} > \eps^{-1}.
\label{NI1}
 \end{align}

\noi
See \cite{CCT2b, CK, Oh, Kishimoto}.
This is a stronger notion
of ill-posedness than  the failure of continuity of the solution map at 0;
given any $\eps > 0$, 
there exist a solution $u$ to \eqref{NLS0}
and $t  \in (0, \eps) $ such that 
\[ \| u(0) - u_0\|_{H^s(\M)} < \eps \qquad \text{ and } \qquad \| u(t)\|_{H^s(\M)} \ges 1, \] 

\noi
where $u_0 = 0$.
Note  that   (the failure of)
continuity at $u_0$ 
can be discussed for  any $u_0 \in H^s(\M)$.
In a similar manner, we can discuss norm inflation based 
at any $u_0 \in H^s(\M)$.
This is precisely our goal in this paper.

\begin{theorem}\label{THM:1}
Given $d \in \N$, 
let  $\M = \R^d$ or $\T^d$.
Suppose that  $s \in \R$ satisfies
either 
\textup{(i)} $s \leq  - \frac 12$ 
when $d = 1$
or 
\textup{(ii)} $s <  0$ 
when $d \geq 2$.
Fix $u_0 \in H^s(\M)$.
Then, 
given any $\eps > 0$, 
there exist a 
solution $u_\eps$ to \eqref{NLS0} on $\M$
and $t  \in (0, \eps) $ such that 
\begin{align}
 \| u_\eps(0) - u_0 \|_{H^s(\M)} < \eps \qquad \text{ and } 
\qquad \| u_\eps(t)\|_{H^s(\M)} > \eps^{-1}.
\label{thm1}
\end{align}

\end{theorem}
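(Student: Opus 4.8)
The plan is to reduce matters to the already understood case of vanishing initial data by peeling off a smooth low-frequency part of $u_0$ and adding a carefully tuned high-frequency profile. Fix a large dyadic parameter $N$ and take as datum $u_{0,\eps} = P_{\le N}u_0 + \phi_N$, where $P_{\le N}$ is a smooth frequency truncation and $\phi_N$ is a norm inflation profile to be specified. For any $u_0 \in H^s(\M)$ the tail satisfies $\|P_{>N}u_0\|_{H^s(\M)} \to 0$ as $N \to \infty$, so that $\|u_{0,\eps}-u_0\|_{H^s} \le \|P_{>N}u_0\|_{H^s} + \|\phi_N\|_{H^s}$ can be made smaller than $\eps$ once $\|\phi_N\|_{H^s}$ is small and $N$ is large. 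Since $P_{\le N}u_0$ and $\phi_N$ are both smooth, $u_{0,\eps} \in H^\sigma(\M)$ for every $\sigma$, so \eqref{NLS0} admits a unique smooth solution $u_\eps$, which is global by the standard global theory for smooth data, and the inflation will occur at a short time $t_N \in (0,\eps)$ within its lifespan.

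I would then expand $u_\eps$ in its Duhamel--Picard series in $f := u_{0,\eps}$. Writing $S(t) = e^{-it\Dl}$ for the linear propagator, one has $u_\eps = \sum_{k \ge 0}\Xi_k(f)$, where $\Xi_0(f) = S(t)f$ and each $\Xi_k$ is a sum of iterated trilinear Duhamel integrals $\pm i \int_0^t S(t-t')(|u|^2u)(t')\,dt'$, homogeneous of degree $2k+1$ in $f$. Substituting $f = P_{\le N}u_0 + \phi_N$ and expanding multilinearly, the single term $\Xi_1(\phi_N,\phi_N,\phi_N)$, the first nonlinear iterate built purely from the profile, is the one that drives the inflation; every other term either contains a factor of $P_{\le N}u_0$ or comes from an iterate of order $k \ge 2$.

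The heart of the positive estimate is to choose $\phi_N$, a superposition of $\sim N$-frequency modes with amplitude $R = R(N)$ and a frequency set $\Sigma_N$ adapted to the resonance relation $|n|^2 = |n_1|^2 - |n_2|^2 + |n_3|^2$ with $n = n_1 - n_2 + n_3$, together with $t_N \in (0,\eps)$, so that the resonant part of $\Xi_1(\phi_N,\phi_N,\phi_N)$ deposits a large low-frequency mass: schematically $\|\Xi_1(\phi_N,\phi_N,\phi_N)(t_N)\|_{H^s} \ges R^3 |\Sigma_N^{\mathrm{res}}|\, t_N$, arranged to exceed $2\eps^{-1}$ while $\|\phi_N\|_{H^s} \sim R|\Sigma_N|^{1/2}N^{s}$ stays small. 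This is exactly the mechanism of the $u_0 = 0$ theory, so I would import the profile construction and lower bound from \cite{Oh, Kishimoto}.

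The \emph{main obstacle} is to show that every remaining term is $o(\eps^{-1})$ at $t = t_N$, uniformly in $N$. The linear term is harmless, since $\|S(t_N)f\|_{H^s} = \|f\|_{H^s} = O(\|u_0\|_{H^s}+\eps)$. The delicate point is that although $P_{\le N}u_0$ is smooth, its high-regularity norms $\|P_{\le N}u_0\|_{H^\sigma} \les N^{\sigma - s}\|u_0\|_{H^s}$ blow up with $N$, so the cross terms $\Xi_1(\phi_N,\phi_N,P_{\le N}u_0)$ (with all permutations and conjugations) and the higher iterates cannot be controlled by crude $H^\sigma$ bounds; they must be estimated by multilinear bounds exploiting the frequency separation between the profile (scale $N$) and $u_0$ (essentially fixed frequencies). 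The required input is a pair of robust trilinear estimates guaranteeing convergence of the series on $[0,t_N]$ and showing that each such term carries strictly fewer powers of the large resonance count $|\Sigma_N^{\mathrm{res}}|$ (equivalently, of $R$) than the dominant term, so that a suitable choice of $R$, $|\Sigma_N|$, and $t_N$ forces them below $\eps^{-1}$. Granting these, the triangle inequality gives $\|u_\eps(t_N)\|_{H^s} \ge \|\Xi_1(\phi_N,\phi_N,\phi_N)(t_N)\|_{H^s} - o(\eps^{-1}) > \eps^{-1}$, which is \eqref{thm1}.
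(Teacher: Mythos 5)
Your skeleton---perturb a smooth approximation of $u_0$ by a high-frequency profile $\phi_N$, expand the solution in a Picard series, isolate the first iterate $\Xi_1(\phi_N)$ as the inflating term, and beat down everything else---is exactly the strategy of the paper, but your write-up has a genuine gap at precisely the point where the work lies: the ``pair of robust trilinear estimates'' that you list as ``required input'' and then grant is the entire content of the argument, and it is not off-the-shelf. The paper proves these bounds by expanding the solution in a power series indexed by ternary trees (Section \ref{SEC:2}) and measuring everything in the Wiener algebra: Lemma \ref{LEM:nonlin1} gives $\| \Xi_j(\phi)(t)\|_{\F L^\infty} \leq C^j t^j \|\phi\|_{\F L^1}^{2j-1}\|\phi\|_{L^2}^2$, while Lemma \ref{LEM:nonlin2} bounds $\Xi_j(u_0+\phi)-\Xi_j(\phi)$ in $\F L^p$ by $C^j t^j \|u_0\|_{\F L^p}\big(\|u_0\|_{\F L^1}^{2j}+\|\phi\|_{\F L^1}^{2j}\big)$; the point is that every cross term contains at least one factor of $u_0$, which is placed in the $\F L^p$ slot with $p=2$. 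Concretely, each cross term costs the \emph{fixed} constant $\|u_0\|_{L^2}$ where the main term earns the large factor $Rf(A)$ (with $f$ as in \eqref{Hs21a}), and this single gain, encoded in conditions (ii) and (vi) of Subsection \ref{SUBSEC:3.2}, is what ``strictly fewer powers of $R$'' means. Note that no frequency separation between the profile and $u_0$ is used anywhere---contrary to what you assert is necessary---and no resonant set is needed either: the profile \eqref{phi1} is just two cubes of side $A$ at frequencies $Ne_1$ and $2Ne_1$, the lower bound of Proposition \ref{PROP:Hs3} uses only the trivial phase bound \eqref{Hs33} on times $t \ll N^{-2}$, and the $j\geq 2$ tail is summed using the tree-counting Lemma \ref{LEM:tree}, which replaces an induction on the recursion \eqref{rec1}.

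Two further problems. First, you couple the truncation frequency of $u_0$ to the profile frequency $N$. For $u_0 \in H^s$ with $s<0$, the norms $\|P_{\leq N}u_0\|_{L^2}$ and $\|P_{\leq N}u_0\|_{\F L^1}$ are in general unbounded as $N \to \infty$ (growing like $N^{-s}$ and $N^{\frac d2 - s}$), so the series-convergence and cross-term bounds would have to absorb these growing factors; your claim that the linear and cross terms are harmless silently assumes they are $O(1)$. The paper avoids this entirely by a two-parameter diagonal reduction: Proposition \ref{PROP:main} treats a \emph{fixed} smooth $u_0$, all of whose norms are constants independent of $N$, and Theorem \ref{THM:1} then follows by approximating $u_0 \in H^s$ by smooth functions $u_{0,k}$ and diagonalizing. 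Second, you never verify that admissible parameters $(R, A, T, N)$ exist; this is not automatic, it is exactly where the hypotheses $s \leq -\frac12$ for $d=1$ and $s<0$ for $d \geq 2$ enter, and the paper needs three separate regimes $s<-\frac d2$, $s=-\frac d2$, $-\frac d2 < s < 0$ with rather delicate choices (e.g.\ \eqref{Z3}--\eqref{Z4}) to make conditions (i)--(vi) of Subsection \ref{SUBSEC:3.2} hold simultaneously. Without that verification, the proof is incomplete even granting the multilinear estimates.
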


When $u_0 = 0$, 
Theorem \ref{THM:1} is reduced to 
 the usual norm inflation
(based at the zero function) 
stated in \eqref{NI1}.
As a corollary to Theorem \ref{THM:1}, 
we obtain the following discontinuity of the solution map.

\begin{corollary}
Let $(s, d)$ be as in Theorem \ref{THM:1}.
Then, 
  for any $T>0$, 
the solution map $\Phi: u_0\in  H^s(\M) \mapsto u \in C([-T, T]; H^s(\M))$ 
to the cubic NLS \eqref{NLS0} is discontinuous
everywhere in $H^s(\M)$.
\end{corollary}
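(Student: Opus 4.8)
The plan is to deduce the corollary directly from Theorem \ref{THM:1} by a contradiction argument, since all of the analytic content is already carried by the norm inflation statement and nothing further about the equation is needed. First I would fix an arbitrary base point $u_0 \in H^s(\M)$ and an arbitrary $T > 0$, and suppose, towards a contradiction, that the solution map $\Phi$ is continuous at $u_0$. Writing $u := \Phi(u_0) \in C([-T,T]; H^s(\M))$, continuity at $u_0$ furnishes a $\dl > 0$ such that
\[
\| v - u_0 \|_{H^s(\M)} < \dl
\implies
\| \Phi(v) - u \|_{C([-T,T]; H^s(\M))} < 1.
\]
Since $u$ is a single fixed element of $C([-T,T]; H^s(\M))$, the quantity $M_0 := \| u \|_{C([-T,T]; H^s(\M))}$ is finite; this is the only place where I use that the reference trajectory starting from $u_0$ is a bona fide $H^s$-valued curve on $[-T,T]$.

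Next I would invoke Theorem \ref{THM:1} to violate this estimate. Choose $\eps > 0$ small enough that $\eps < \min(\dl, T)$ and $\eps^{-1} > M_0 + 1$. Theorem \ref{THM:1} then produces a global solution $u_\eps$ and a time $t \in (0,\eps)$ with $\| u_\eps(0) - u_0 \|_{H^s(\M)} < \eps < \dl$ and $\| u_\eps(t) \|_{H^s(\M)} > \eps^{-1}$. Identifying $u_\eps = \Phi(u_\eps(0))$ and using $\| u_\eps(0) - u_0 \|_{H^s(\M)} < \dl$, the continuity estimate yields $\| u_\eps - u \|_{C([-T,T]; H^s(\M))} < 1$. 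Since $\eps < T$ forces $t \in (0,\eps) \subset [-T,T]$, evaluating at $t$ gives
\[
\| u_\eps(t) \|_{H^s(\M)}
\leq \| u_\eps(t) - u(t) \|_{H^s(\M)} + \| u(t) \|_{H^s(\M)}
< 1 + M_0,
\]
which contradicts $\| u_\eps(t) \|_{H^s(\M)} > \eps^{-1} > M_0 + 1$. Hence $\Phi$ cannot be continuous at $u_0$, and as $u_0 \in H^s(\M)$ was arbitrary, $\Phi$ is discontinuous everywhere on $H^s(\M)$, for every $T > 0$.

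I do not expect any genuine obstacle here: the corollary is a soft consequence of Theorem \ref{THM:1}, with all of the difficulty front-loaded into establishing that theorem. The only points demanding care are bookkeeping ones, namely ensuring that the inflation time $t$ falls inside the interval $[-T,T]$ (secured by imposing $\eps < T$) and that the base trajectory $u = \Phi(u_0)$ has finite $C([-T,T]; H^s(\M))$ norm (guaranteed because the solutions in Theorem \ref{THM:1} are global in time and $H^s$-valued). One could equivalently phrase the conclusion sequentially, taking $\eps = \eps_n \to 0$ to obtain initial data $u_{\eps_n}(0) \to u_0$ in $H^s(\M)$ whose solutions satisfy $\| \Phi(u_{\eps_n}(0)) \|_{C([-T,T]; H^s(\M))} \geq \| u_{\eps_n}(t_n) \|_{H^s(\M)} \to \infty$, which again precludes convergence to the finite-norm limit $\Phi(u_0)$.
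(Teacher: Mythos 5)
Your proof is correct and is exactly the intended argument: the paper states this corollary without proof, treating it as an immediate consequence of Theorem \ref{THM:1}, and your contradiction argument (norm inflation at $u_0$ with inflation time $t \in (0,\eps) \subset [-T,T]$ versus a modulus of continuity at $u_0$) is precisely that soft deduction. The bookkeeping points you flag --- requiring $\eps < \min(\dl, T)$ and $\eps^{-1} > M_0 + 1$ --- are handled correctly, so there is nothing to add.
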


Norm inflation based at general initial data
was first studied in 
a recent paper by Xia \cite{Xia}
in the context of the
nonlinear wave equations on $\T^3$.
The argument in \cite{Xia}
is based on the (dispersionless)
ODE approach in the spirit of 
Christ-Colliander-Tao
\cite{CCT2b} and Burq-Tzvetkov \cite{BT1}.

We prove
Theorem \ref{THM:1}
by
 a Fourier analytic approach. 
Recently,  Iwabuchi-Ogawa \cite{IO} 
developed a method  for proving ill-posedness of 
evolution equations, exploiting high-to-low energy transfer
in the first Picard iterate.
This method is built upon the previous work by Bejenaru-Tao \cite{BT}
and is developed further to cope with a  wider class of equations, 
utilizing  modulation spaces.
See Kishimoto \cite{Kishimoto}
for a norm inflation result on  \eqref{NLS0} via this method.
In the following, we implement a refinement of 
the argument in \cite{IO, Kishimoto}
and prove Theorem \ref{THM:1}.

In \cite{IO, Kishimoto}, 
the (scaled) modulation space $M_{2, 1}$
and its algebra property played an important role.
In the proof of Theorem \ref{THM:1}, 
we simply use the Wiener algebra $\F L^1(\M)$ as a replacement.
Given $\M = \R^d$ or $\T^d$, 
let   $\ft \M$ denote the Pontryagin dual of $\M$, 
i.e.~
\begin{align}
\ft \M = \begin{cases}
 \R^d & \text{if }\M = \R^d, \\
 \Z^d & \text{if } \M = \T^d.
\end{cases}
\label{dual}
\end{align}

\noi
When $\ft \M = \Z^d$, 
we endow it with the counting measure.
We then define
the Fourier-Lebesgue space $\F L^{p}(\M)$  by the norm:
\begin{align*}
\|f \|_{\F L^{p}(\M)} = \big\|  \ft f \big\|_{L^{p}(\M)}.
\end{align*}

\noi
In particular, 
$\F L^1(\M)$ corresponds to 
the Wiener algebra.
By the algebra property, it is easy to see that 
 \eqref{NLS0} is analytically locally well-posed
 in $\F L^1(\M)$.

Another key ingredient in \cite{IO, Kishimoto}
is the power series expansion
of a solution $u$ to \eqref{NLS0} with $u|_{t = 0} = u_0$:
\begin{align*}
 u & 
 = \sum_{j = 0}^\infty \Xi_j (u_0),
 \end{align*}

\noi
where $\Xi_j(u_0)$ denotes 
homogeneous multilinear terms (in $u_0$) of 
degree $2j+1$.
In \cite{IO, Kishimoto}, 
 $\Xi_j$ was defined by a recursive relation
of the form: 
\begin{align}
 \Xi_j(u_0) = \sum_{\substack{j_1 + j_2 + j_3 = j\\j_1, j_2, j_3 \geq 1}}
\I[ \Xi_{j_1}(u_0),  \Xi_{j_2}(u_0), \Xi_{j_3}(u_0)],
\label{rec1}
\end{align}

\noi
where $\I$ is the trilinear Duhamel integral operator defined in \eqref{Duhamel1}.
Then, nonlinear estimates were proved inductively.
In the following, we instead define $\Xi_j$ directly
via 
the power series expansion indexed by trees.
This allows us to establish nonlinear estimates without an induction.
See Section \ref{SEC:2}.
Such a power series expansion indexed by trees
 is more suitable in handling an expression such as
$\Xi_j(u_0 + \phi) - \Xi_j(\phi)$,
which is the main new ingredient
for showing norm inflation based at a general initial condition $u_0$.

When $ d \geq 3$, 
we have $s_\text{crit} > 0$
and thus 
Theorem \ref{THM:1} leaves a gap $(0, s_\text{crit})$.\footnote{In view of the mass conservation, 
\eqref{thm1} can not hold for $s = 0$.
Moreover, when $s_\text{crit}>  0$, 
we do not have norm inflation at $s = s_\text{crit}$
in view of the well-posedness in the scaling critical spaces.}
In this case, one needs to exploit low-to-high energy transfer
to prove an analogue of Theorem \ref{THM:1} for $s \in (0, s_\text{crit})$.
By employing the ODE approach as in \cite{CCT2b, Oh}, 
we plan to address this issue
in a forthcoming work.

\medskip

We conclude this introduction by several remarks.

\begin{remark}\rm
We point out that Theorem \ref{THM:1}
also holds on a general irrational torus $\T_\al^d = \prod_{j = 1}^d (\R/\al_j \Z)$, 
$\al_j > 0$.
This is due to the fact that
the proof of Theorem \ref{THM:1} does not use any fine arithmetic property
of frequency interactions.
Indeed, we only use the following trivial estimate:
\begin{align*}
\big||\xi|^2 - |\xi_1|^2 + |\xi_2|^2 - |\xi_3|^2\big| \les N^2
\end{align*}

\noi
for $\xi = \xi_1 - \xi_2 + \xi_3$ with $|\xi_j| \les N$, $j = 1, 2, 3$.
See \eqref{Hs33} below.

\end{remark}

\begin{remark}\rm
In Theorem \ref{THM:1}, 
we only considered the cubic nonlinearity
for simplicity of the presentation.
Our method is elementary
and can be applied to other power-type
nonlinearities.

We can also consider 
norm inflation based at general initial data for 
the cubic fractional NLS:
\begin{align*}
i \dt u +  (- \Dl^2)^\al u \pm  |u|^{2}u  = 0. 
\end{align*}

\noi
In \cite{CP}, Choffrut-Pocovnicu recently applied
the argument in \cite{IO, Kishimoto}
and proved norm inflation \eqref{NI1} (at the zero function)
for the cubic half-wave equation (i.e. $\al = \frac 12$)
on $\R$ and $\T$ when $s < 0$.
By adapting the proof of Theorem \ref{THM:1}, 
we can extend this result in \cite{CP}
to  norm inflation 
based at general initial data
as in Theorem \ref{THM:1}.\footnote{After the completion of this manuscript, the author has learned that Choffrut and Pocovnicu \cite{CP} extended their norm inflation result to the general cubic fractional NLS for any $\al > 0$. In particular when $\al > 1$, they established norm inflation above the scaling critical regularity.}
\end{remark}

\begin{remark}\label{REM:Wick1}\rm
The equation \eqref{NLS0}
is also invariant under the Galilean symmetry.
This symmetry preserves the $L^2$-norm of solutions,
giving rise to another critical regularity  $s_\text{crit}^\infty := 0$.
In particular, 
this critical regularity plays an important role when $d = 1$.
While
 \eqref{NLS0} is globally well-posed
in $L^2(\R)$ \cite{Tsutsumi},  
 it is known to be `mildly ill-posed' below $L^2(\R)$
in the sense of the failure 
of local uniform continuity in negative Sobolev spaces \cite{KPV, CCT1}.

On $\T$, the contrast between $s \geq 0$ and $s< 0$ is more drastic.
On the one hand, 
\eqref{NLS0} is  globally well-posed in $L^2(\T)$ \cite{BO1}.
On the other hand,  it is ill-posed in negative Sobolev spaces.
Christ-Colliander-Tao \cite{CCT2} and Molinet \cite{MOLI}
showed discontinuity of the solution map in negative Sobolev spaces.
Moreover,  Guo-Oh \cite{GO} proved non-existence of solutions for \eqref{NLS0}
with initial data lying strictly in a negative Sobolev space.

As an alternative model to \eqref{NLS0}
in low regularity setting on $\T^d$, we have 
the following  Wick ordered cubic NLS:
\begin{equation}
\label{NLS1} 
\textstyle
i \dt u - \Dl u \pm \big( |u|^2 -2  \int_{\T^d}  |u|^2 dx\big) u = 0.
\end{equation}

\noi
This equation  is gauge equivalent to \eqref{NLS0} in $L^2(\T^d)$,
while it behaves better than \eqref{NLS0} outside $L^2(\T^d)$.
See \cite{OS} for more discussion on this issue.
By slightly modifying the argument, 
Theorem \ref{THM:1} also holds for \eqref{NLS1} on $\T^d$.
See Remark \ref{REM:Wick2}.

Lastly, we point out that 
 the well-posedness issue of \eqref{NLS0} on $\R$
 and of \eqref{NLS1} on $\T$ 
 in $H^s$ for $-\frac 12 < s < 0$
is widely open.
See \cite{KT1, CCT3, KT2, GO} for partial results on this problem.

\end{remark}

\begin{remark}\label{REM:prob}
\rm

Given initial data below a scaling critical regularity $s_\text{crit}$, 
it is often possible to 
suitably randomize initial data
and 
construct solutions
in a probabilistic manner.
See \cite{BO96, BT1, BOP1, BOP2}.
Such probabilistic well-posedness results
do not 
yield (deterministic) continuous dependence in $H^s$ with $s < s_\text{crit}$.\footnote{See also \cite{BT3} for the notion of probabilistic continuous dependence.}
In particular, 
norm inflation based at general data 
such as 
Theorem \ref{THM:1}
does not contradict 
these probabilistic results.

It is worthwhile to note that these probabilistic
constructions of solutions yield  mild continuous dependence
in a smoother regularity.
Given $u_0 \in H^s$ with $s < s_\text{crit}$, 
the argument in \cite{BO96, BT1, BOP1, BOP2} allows
us to randomize $u_0$
and construct 
a set $\Si_{u_0}$ of full probability 
such that, given any $\o \in \Si_{u_0}$, 
there exists 
a random solution 
$u^\o$ of the form 
$u^\o = S(t) u_0^\o + v^\o$, 
where $S(t) u_0^\o$ is the linear solution 
with the randomized initial data $u_0^\o$
and $v^\o$ denotes the random nonlinear part
belonging to a smoother space; 
$v^\o(t) \in H^\s$ for some $\s \geq s_\text{crit}$.
Moreover, 
there exists $\eps _0 > 0$ such that 
if a deterministic function $w_0$
satisfies 
\begin{align}
\| u_0^\o - w_0\|_{H^\s} < \eps_0
\label{conti0}
\end{align}

\noi
for some $\o \in \Si_{u_0}$, 
then 
there exists a solution $w$  with $w|_{t = 0} = w_0$, satisfying
\begin{align}
\| u^\o(t)  - w(t)\|_{H^\s}\les \| u_0^\o - w_0\|_{H^\s}, 
\label{conti}
\end{align}

\noi
for $t \in [-T_\o, T_\o]$.
Namely, we have some kind of continuous dependence
in this probabilistic setting, 
but in a smoother regularity.
Note that this mild continuous dependence \eqref{conti} does not contradict Theorem \ref{THM:1}
since they 
are in different regularity regimes:
 $s < s_\text{crit} \leq \s$. 
In particular,  \eqref{conti0} is much more restrictive than the first estimate in \eqref{thm1}.

\end{remark}

\medskip
The defocusing/focusing nature of 
the equation does not play any role.
Hence, we assume that it is defocusing
(with the $+$\,sign in \eqref{NLS0}) in the following.
Moreover, in view of the time reversibility of the equation, 
we only consider positive times.

\section{Preliminary analysis}
\label{SEC:2}

In this section, we first discuss the local well-posedness 
of \eqref{NLS0} in the Wiener algebra $\F L^1$.
Then, we express solutions in  power series in terms of initial data, 
where the summation ranges over all finite ternary trees.
We then establish basic nonlinear estimates
on the multilinear terms arising in the power series expansion.

\subsection{Power series expansion indexed by trees}

We define the Duhamel integral  operator
$\I$ by 
\begin{align}
\I[u_1, u_2, u_3](t)
:= i 
\int_0^t S(t - t') [u_1 (t')\cj{u_2(t')} u_3(t')]dt',
\label{Duhamel1}
\end{align}

\noi
where $S(t) = e^{-it \Dl}$ is the linear Schr\"odinger propagator.
When all the three arguments $u_1, u_2$, and $u_3$ are identical, 
we use the following shorthand notation:
\begin{align}
\I^3[u] := \I[u, u, u].
\label{Duhamel2}
\end{align}

\noi
We say that $u$ is a solution to \eqref{NLS0}
with $u|_{t = 0} = u_0$
if $u$ satisfies the following Duhamel formulation:
\begin{align}
u(t) = S(t) u_0 + \I^3[u](t).
\label{Duhamel3}
\end{align}

\noi
We first state the local well-posedness of \eqref{NLS0} 
in the Wiener algebra $\F L^1$.

\begin{lemma}\label{LEM:LWP}

The cubic NLS \eqref{NLS0} is locally well-posed
in the Wiener algebra $\F L^1$.
More precisely, given $u_0 \in \F L^1$, 
there exist $T \sim \| u_0\|_{\F L^1}^{-2}>0 $ and a unique solution $ u \in C([-T, T]; \F L^1)$
satisfying \eqref{Duhamel3}.

\end{lemma}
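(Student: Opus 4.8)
The plan is to solve the Duhamel equation \eqref{Duhamel3} by a standard Banach fixed point argument, so that the entire proof rests on two structural properties of the Wiener algebra. First, I would record that the linear propagator acts isometrically on $\F L^1$: since $\ft{S(t) f}(\xi) = e^{it|\xi|^2}\ft f(\xi)$ has the same modulus as $\ft f(\xi)$, we have $\| S(t) f\|_{\F L^1} = \| f\|_{\F L^1}$ for every $t \in \R$. Second, I would record the algebra property together with conjugation invariance: the Fourier transform turns products into convolutions on $\ft \M$, so Young's inequality yields $\| f g\|_{\F L^1} \les \| f\|_{\F L^1}\| g\|_{\F L^1}$, while $\ft{\cj f}(\xi) = \cj{\ft f(-\xi)}$ gives $\| \cj f\|_{\F L^1} = \| f\|_{\F L^1}$. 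These are exactly the features for which $\F L^1$ was chosen as a replacement for the modulation space $M_{2,1}$ of \cite{IO, Kishimoto}.

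With these in hand, I would set up the solution map $\G u(t) := S(t) u_0 + \I^3[u](t)$ on the complete metric space $B_{R,T} := \{ u \in C([-T,T]; \F L^1) : \| u\|_{C([-T,T];\F L^1)} \leq R\}$ with $R := 2\| u_0\|_{\F L^1}$. Pulling $S(t - t')$ through the Duhamel integral in \eqref{Duhamel1} by the isometry and then applying the algebra property slot by slot gives
\begin{align*}
\| \I^3[u](t)\|_{\F L^1} \leq \int_0^{|t|} \| u(t')\|_{\F L^1}^3 \, dt' \leq T \| u\|_{C([-T,T];\F L^1)}^3,
\end{align*}
so that $\| \G u\|_{C([-T,T];\F L^1)} \leq \| u_0\|_{\F L^1} + TR^3$. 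For the Lipschitz estimate I would use the telescoping identity
\begin{align*}
\I^3[u] - \I^3[v] = \I[u - v, u, u] + \I[v, u - v, u] + \I[v, v, u - v],
\end{align*}
noting that the conjugation on the middle slot built into $\I$ is harmless because $\| \cj w\|_{\F L^1} = \| w\|_{\F L^1}$; the same two properties then yield $\| \I^3[u] - \I^3[v]\|_{C([-T,T];\F L^1)} \les T R^2 \, \| u - v\|_{C([-T,T];\F L^1)}$.

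Choosing $T \sim R^{-2} \sim \| u_0\|_{\F L^1}^{-2}$ small enough forces $\| u_0\|_{\F L^1} + TR^3 \leq R$ and makes the Lipschitz constant strictly less than $1$, so $\G$ maps $B_{R,T}$ into itself and is a contraction; the contraction mapping principle then produces a unique fixed point, which is the desired solution satisfying \eqref{Duhamel3}, with continuity in time and uniqueness in the full class following by standard arguments. As for the main obstacle: I expect there to be essentially none. The whole force of passing to $\F L^1$ is that its Banach algebra structure trivializes the multilinear estimates, and the only genuine points to verify are the two structural facts above, after which the fixed point scheme is entirely routine.
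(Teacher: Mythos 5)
Your proposal is correct and is precisely the argument the paper has in mind: the paper itself states only that the lemma ``follows from a standard fixed point argument'' in view of the unitarity of $S(t)$ in $\F L^1$ and the algebra property, and omits all details. Your write-up simply supplies those details (isometry of the propagator, Young's inequality for the product, conjugation invariance, the telescoping trilinear difference, and the choice $T \sim \| u_0\|_{\F L^1}^{-2}$), all of which are accurate.
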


In view of the unitarity of $S(t)$ in $\F L^1$ and 
the algebra property of $\F L^1$, 
Lemma \ref{LEM:LWP} follows
from a standard fixed point argument.
We omit details.

Let $\phi \in \F L^1$.
Then, (the proof of) Lemma \ref{LEM:LWP}   guarantees the convergence of the following Picard iteration
scheme: 
\begin{align}
P_0(\phi) = S(t) \phi
\qquad \text{and}\qquad
P_{j}(\phi) = S(t) \phi + \I^3[P_{j-1}(\phi)], \ \  j \in \N, 
\label{power2}
\end{align}

\noi
at least for short times. 
It follows from  \eqref{Duhamel2} and \eqref{power2}  that $P_j$ consists of multilinear terms
of degrees at most $3^j$ (in $\phi$).
In the following, we discuss a more general recursive scheme
and express a solution in a power
series indexed by trees.
Christ \cite{CH2} implemented such a power series expansion
of solutions to the Wick ordered cubic NLS \eqref{NLS1}
on $\T$ in a low regularity setting.
Since we work only with smooth functions, 
our presentation is much simpler.

We introduce the following notion of (ternary) trees.
As in  \cite{CH2},
our trees 
refer to a particular subclass of usual trees with the following properties:

\begin{definition} \label{DEF:tree} \rm
(i) Given a partially ordered set $\TT$ with partial order $\leq$, 
we say that $b \in \TT$ 
with $b \leq a$ and $b \ne a$
is a child of $a \in \TT$,
if  $b\leq c \leq a$ implies
either $c = a$ or $c = b$.
If the latter condition holds, we also say that $a$ is the parent of $b$.

\smallskip 

\noi
(ii) 
A tree $\TT$ is a finite partially ordered set,  satisfying
the following properties:
\begin{itemize}
\item Let $a_1, a_2, a_3, a_4 \in \TT$.
If $a_4 \leq a_2 \leq a_1$ and  
$a_4 \leq a_3 \leq a_1$, then we have $a_2\leq a_3$ or $a_3 \leq a_2$,

\item
A node $a\in \TT$ is called terminal, if it has no child.
A non-terminal node $a\in \TT$ is a node 
with  exactly three children,

\item There exists a maximal element $r \in \TT$ (called the root node) such that $a \leq r$ for all $a \in \TT$,

\item $\TT$ consists of the disjoint union of $\TT^0$ and $\TT^\infty$,
where $\TT^0$ and $\TT^\infty$
denote  the collections of non-terminal nodes and terminal nodes, respectively.
\end{itemize}

\end{definition}

\noi
Note that the number $|\TT|$ of nodes in a tree $\TT$ is $3j+1$ for some $j \in \mathbb{N}\cup\{0\}$,
where $|\TT^0| = j$ and $|\TT^\infty| = 2j + 1$.
Let us denote  the collection of trees in the $j$th generation (i.e.~with $j$ parental nodes) by $\BT(j)$, i.e.
\begin{equation*}
\BT(j) := \big\{ \TT : \TT \text{ is a tree with } |\TT| = 3j+1 \big\}.
\end{equation*}

\noi
Then, we have the following exponential bound
on the number 
$\# \BT(j)$ of 
 trees in the $j$th generation.

\begin{lemma}\label{LEM:tree}
Let $\BT(j)$ be as above.
Then, there exists $C_0 >0$ such that 
\begin{align}
\# \BT(j) \leq C_0^j
\label{tree0a}
\end{align}

\noi
for all $j \in \mathbb{N}\cup\{0\}$.
\end{lemma}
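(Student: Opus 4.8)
The plan is to count the trees recursively and then convert the recursion into an exponential bound by an injective encoding. Write $a_j := \# \BT(j)$, so that $a_0 = 1$. Given $\TT \in \BT(j)$ with $j \geq 1$, the root $r$ is non-terminal and therefore has exactly three children; deleting $r$ splits $\TT$ into three subtrees, each of which is again a tree in the sense of Definition \ref{DEF:tree}. Since $r$ is the unique parental node not contained in these subtrees, their generations $j_1, j_2, j_3$ satisfy $j_1 + j_2 + j_3 = j-1$. Assigning an (arbitrary) order to the three children of each non-terminal node realizes every $\TT$ as a plane ternary tree, and forgetting this order is surjective onto $\BT(j)$; hence $a_j$ is bounded by the number $\tilde a_j$ of plane ternary trees with $j$ internal nodes, which obeys
\begin{align}
\tilde a_j = \sum_{\substack{j_1 + j_2 + j_3 = j-1 \\ j_1, j_2, j_3 \geq 0}} \tilde a_{j_1}\, \tilde a_{j_2}\, \tilde a_{j_3}, \qquad j \geq 1, \quad \tilde a_0 = 1.
\label{tree-rec}
\end{align}
This is the classical recursion for ternary trees, solved by the Fuss–Catalan numbers $\tilde a_j = \frac{1}{2j+1}\binom{3j}{j}$.

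To pass from \eqref{tree-rec} to the bound \eqref{tree0a}, I would avoid solving the recursion and instead encode each plane ternary tree by a binary word. Traverse $\TT$ in depth-first order and record a $1$ at each non-terminal node and a $0$ at each terminal node. Since $|\TT^0| = j$ and $|\TT^\infty| = 2j+1$, this produces a word in $\{0,1\}^{3j+1}$ with exactly $j$ ones, and distinct plane trees yield distinct words. Consequently
\begin{align*}
\# \BT(j) \leq \tilde a_j \leq \binom{3j+1}{j} \leq 2^{3j+1} \leq 16^{\,j}
\end{align*}
for $j \geq 1$, while $\# \BT(0) = 1$; thus \eqref{tree0a} holds with $C_0 = 16$.

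The main obstacle is that a direct induction on \eqref{tree-rec} does not close with a pure exponential bound. Substituting $\tilde a_{j'} \leq C_0^{\,j'}$ into \eqref{tree-rec} gives $\tilde a_j \leq C_0^{\,j-1}\binom{j+1}{2}$, and the composition count $\binom{j+1}{2}$ is polynomial in $j$ and cannot be absorbed into the factor $C_0$. The encoding above sidesteps this difficulty by counting directly. Equivalently, one can package \eqref{tree-rec} into the generating function $f(x) = \sum_{j\geq 0}\tilde a_j x^j$, which satisfies $f = 1 + x f^3$; since $F(x,y) := y - 1 - xy^3$ obeys $F(0,1) = 0$ and $\partial_y F(0,1) = 1 \neq 0$, the analytic implicit function theorem yields an analytic solution $f$ near $x = 0$ whose positive radius of convergence forces $\limsup_{j} \tilde a_j^{1/j} < \infty$, i.e.\ \eqref{tree0a}.
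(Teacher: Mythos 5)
Your proof is correct, but it reaches \eqref{tree0a} by a genuinely different route than the paper. The paper starts from the same root-decomposition recursion (its \eqref{tree0b}, which is your recursion) but then closes an induction by \emph{strengthening} the inductive hypothesis to the weighted bound $\# \BT(k) \leq C_0^k/(1+k)^2$ in \eqref{tree0c}: the polynomial weight is precisely what makes the induction close, because the threefold convolution of the summable sequence $(1+k)^{-2}$ is again bounded by a constant multiple of $(1+j)^{-2}$ (using $3\max(j_i+1) \geq j+1$), and the resulting absolute constant $3^2\big(\sum_{k\geq 0}(1+k)^{-2}\big)^2$ is then absorbed into the choice of $C_0$. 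In other words, the obstruction you correctly diagnosed --- that substituting a pure exponential bound into the recursion produces a factor $\binom{j+1}{2}$ that cannot be absorbed --- is exactly the point of the paper's proof, resolved there by weighting the hypothesis rather than by abandoning induction. Your argument instead sidesteps the recursion entirely: the preorder encoding of a plane ternary tree as a word in $\{0,1\}^{3j+1}$ with exactly $j$ ones is injective, giving the explicit bound $\# \BT(j) \leq \binom{3j+1}{j} \leq 2^{3j+1} \leq 16^j$ for $j \geq 1$. As for what each approach buys: yours yields an explicit constant ($C_0 = 16$; the Fuss--Catalan formula even identifies the sharp growth rate $27/4$), needs no induction, and is careful about the passage from the poset trees of Definition \ref{DEF:tree} to plane trees, a point the paper glosses over when it writes \eqref{tree0b} as an identity over ordered triples; the paper's weighted-induction trick, on the other hand, uses nothing beyond the recursion itself, so it transfers verbatim to settings where no clean combinatorial encoding is at hand (compare the recursive definition \eqref{rec1} and Remark \ref{REM:Kishimoto}). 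Your generating-function remark ($f = 1 + x f^3$ plus the implicit function theorem) is a third valid route to the same exponential bound.
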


This lemma is a basic fact about ternary trees.
We present the proof for the sake of completeness.

\begin{proof}
Clearly, we have $\#\BT(0) = \#\BT(1) = 1$.
We now consider   $j \geq 2$.
From Definition \ref{DEF:tree}, 
we have  the following identity:
\begin{align}
\# \BT(j)  = \sum_{
\substack{j_1 + j_2 + j_3 = j - 1\\j_1, j_2, j_3 \geq 0}}
\# \BT(j_1)\cdot \# \BT(j_2)\cdot \# \BT(j_3).
\label{tree0b}
\end{align}

\noi
Assume the following stronger estimate:
\begin{align}
\# \BT(k) \leq \frac{C_0^k}{(1+k)^2}
\label{tree0c}
\end{align}

\noi
for all $k \leq j-1$.
Then, 
 from \eqref{tree0b}
and \eqref{tree0c}
with $3 \max(j_1+1, j_2+1, j_3+1) \ge j+1$, 
we have
\begin{align}
\# \BT(j)  
& \leq \sum_{\substack{j_1 + j_2 + j_3 = j - 1\\j_1, j_2, j_3 \geq 0}}
\frac{C_0^{j_1}}{(1+j_1)^2}\frac{C_0^{j_2}}{(1+j_2)^2}\frac{C_0^{j_3}}{(1+j_3)^2} \notag\\
& \leq 3^2 \bigg(\sum_{k  \geq 0}
\frac{1}{(1+k)^2}\bigg)^2
\cdot \frac{C_0^{j-1}}{(1+j)^2}
\leq 
\frac{C_0^{j}}{(1+j)^2}, 
\end{align}

\noi
where the last inequality holds 
by choosing $C_0 =   3^2 \big(\sum_{k  \geq 0}
\frac{1}{(1+k)^2}\big)^2< \infty$.
This proves \eqref{tree0c} for $k = j$.
By induction, \eqref{tree0c}  holds for all $j \in \N \cup\{0\}$,
yielding \eqref{tree0a}
for all $j \in \N \cup\{0\}$.
\end{proof}

\begin{remark}\label{REM:Kishimoto}
\rm

In \cite{Kishimoto}, 
a similar counting argument is needed
to control the number of terms appearing
in the recursive definition \eqref{rec1}
of $\Xi_j$.

\end{remark}

\medskip

Next, we express the solution $u$ constructed in Lemma \ref{LEM:LWP}
in a power series indexed by trees.
Fix $\phi \in \F L^1$.
Given a tree $\TT \in \BT(j)$,
$j \in \N \cup\{0\}$, 
we associate a multilinear\footnote{By a multilinear operator, 
we mean an operator which is linear or conjugate linear with respect to each argument,
i.e.~linear over real numbers.} 
operator (in $\phi$) by the following rules:
\begin{itemize}
\item Replace a non-terminal node ``\,$\<1'>$\,'' 
by the Duhamel integral operator $\I$ defined in \eqref{Duhamel1}
with its three children as arguments $u_1, u_2$, and $u_3$, 

\item Replace a terminal node ``\,$\<1>$\,'' 
by the linear solution $S(t) \phi$. 
\end{itemize}

\noi
In the following, we denote this mapping
from $ \bigcup_{j = 0}^\infty \BT(j)$ to  $\mathcal{D}'(\M\times [-T, T])$
by $\Psi_\phi$.

For example, 
 $\Psi_\phi$ maps
the trivial  tree ``\,$\<1>$\,'',  consisting only of the root node
to the linear solution $S(t) \phi$.
Namely, we have $\Psi_\phi(\,\<1>\,) = S(t) \phi$.
Similarly, we have 
\begin{align}
\Psi_\phi( \<3>) & =
\I^3[S(t) \phi]
\label{tree1a}
\\
\Psi_\phi\big( \<31>\big) & =
\I[\I^3[S(t) \phi], S(t) \phi, S(t) \phi], 
\notag
\end{align}

\noi
where $\I^3$ is as in \eqref{Duhamel2}.
In view of the algebra property of $\F L^1$ 
along with the continuity and unitarity of $S(t)$, 
we have
$\Psi_\phi(\TT) \in C([-T, T]; \F L^1)$
for any tree $\TT$,  provided $\phi \in \F L^1$.
Note that, if  $\TT \in \BT(j)$, 
then $\Psi_\phi(\TT) $ is  $(2j+1)$-linear in $\phi$.
Lastly, we define $\Xi_j$ by 
\begin{align}
\Xi_j (\phi)
: =  \sum_{\TT \in \BT(j)} \Psi_\phi (\TT).
\label{tree1}
 \end{align}

Now, let us extend the definition of $\Psi_\phi$
by adding the following rules
for another kind of terminal node ``$\,\<1''>\,$'':
\begin{itemize}
\item Replace a star-shaped terminal node ``$\,\<1''>\,$''
by the solution $u$, 

\item Extend the definition of $\Psi_\phi$
to formal sums of trees by imposing linearity.
\end{itemize}

\noi
Then, the Duhamel formulation \eqref{Duhamel3} can be represented by 
\begin{align}
 \<1''>\, = \, \<1>\, + \, \<3'>.
\label{R1}
 \end{align}

\noi
By recursively applying \eqref{R1}
and eliminating the occurrence of ``$\,\<1''>\,$''
from younger trees, we have 
\begin{align}
 \<1''>\, 
 & = \, \<1>\, + \, \<3''>\, + \,\<31'>\, \notag\\
 & = \, \<1>\, + \, \<3'''> \,+\, \<31'> \,+\,   \<32'> \notag\\
 & = \, \<1>\, + \, \<3> \,+\,\<31'>\,+ \,\<32'>\,+\,   \<33'> 
 \notag\\
 &  = \cdots = \, \<1>\, + \, \<3> \,+\,\<31>\,+ \,\<32>\,+\,   \<33> 
 + \cdots.
\label{R2}
 \end{align}

\noi
This iterative scheme leads, under $\Psi_\phi$, 
to  the following (formal) power series
expansion of the solution $u$ to \eqref{NLS0} with $u|_{t = 0} = \phi \in \F L^1$:
\begin{align*}
 u & 
 = \sum_{j = 0}^\infty \Xi_j (\phi)
 = \sum_{j = 0}^\infty \sum_{\TT \in \BT(j)} \Psi_\phi (\TT)\notag\\
& = 
\Psi_\phi(\, \<1>\,) 
+ \Psi_\phi( \<3>) +
\Psi_\phi\big( \<31>\big) 
+
\Psi_\phi\big( \<32>\big) 
+ \Psi_\phi\big( \<33>\big) 
+\cdots.
 \end{align*}

\noi
It is easy to see that 
 the above power series converges (absolutely)
in $C([-T, T]; \F L^1)$
and the first equality holds
as long as $T = T(\|\phi\|_{\F L^1}) \sim \| \phi\|_{\F L^1}^{-2} > 0$ is sufficiently small.
This follows from  Lemma \ref{LEM:nonlin1} below
along with Lemma \ref{LEM:tree}.
See also \cite[Proposition 6.4.2]{CH2}.

Given a tree $\TT \in \BT(j)$, 
label its terminal nodes
by $a_1, \dots, a_{2j+1}$
(say, by moving from left to right in the planar graphical representation of the tree).
Given functions $\phi_1, \dots, \phi_{2j+1} \in \F L^1$,
we also define $\Psi (\TT; \phi_1, \dots, \phi_{2j+1})$
by the following rules:
\begin{itemize}
\item Replace a non-terminal node $a \in \TT^0$
by the Duhamel integral  operator $\I$
with its three children as arguments $u_1, u_2$, and $u_3$, 

\item Replace  terminal nodes $a_k \in \TT^\infty$
by the linear solutions $S(t) \phi_k$, $k = 1, \dots, 2j+1$. 
\end{itemize}

\noi
In particular, we have
$\Psi_\phi(\TT) = \Psi (\TT; \phi, \dots, \phi)$.

\subsection{Basic multilinear estimates}

In this subsection, we state basic multilinear estimates
on $\Xi_j$.
These will be used in establishing $H^s$-estimates on $\Xi_j(u_0 + \phi)$
in Section \ref{SEC:proof}.

\begin{lemma}\label{LEM:nonlin1}
There exists $C >0$ such that 
\begin{align}
\| \Xi_j (\phi)(t) \|_{\F L^1}
& \leq C^j t^{j} 
\| \phi\|_{\F L^1}^{2j+1},  
\label{nonlin11a}\\
\| \Xi_j (\phi)(t) \|_{\F L^\infty}
& \leq C^j t^{j} 
\| \phi\|_{\F L^1}^{2j-1} 
\| \phi\|_{L^2}^{2} 
\label{nonlin11}
\end{align}
	
\noi
for 
all $\phi \in \F L^{1}(\M)$
and 
all $j \in \N$.

\end{lemma}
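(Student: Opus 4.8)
The plan is to reduce everything to \emph{per-tree} estimates and then sum over $\BT(j)$ using Lemma \ref{LEM:tree}. Concretely, I would prove that each individual $\Psi_\phi(\TT)$ with $\TT \in \BT(j)$ satisfies the analogues of \eqref{nonlin11a} and \eqref{nonlin11} with constant $1$, and then bound $\|\Xi_j(\phi)(t)\|$ by $\#\BT(j) \leq C_0^j$ times the per-tree bound via the triangle inequality; this yields the stated estimates with $C = C_0$. The structural facts I would use repeatedly are: (i) the unitarity of $S(t)$ on $\F L^1$, $L^2$, and $\F L^\infty$, since its Fourier symbol has modulus one; (ii) the algebra property $\|fg\|_{\F L^1}\les \|f\|_{\F L^1}\|g\|_{\F L^1}$ together with the embedding $\|f\|_{L^\infty}\leq \|f\|_{\F L^1}$; and (iii) the fact that each Duhamel operator $\I$ supplies exactly one time integration $\int_0^t \cdots dt'$, hence one extra power of $t$ per non-terminal node. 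Note also that conjugation preserves all the relevant norms, so the complex conjugate in \eqref{Duhamel1} is harmless, and $\|f\|_{\F L^2} = \|f\|_{L^2}$ by Plancherel.

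First I would prove the per-tree $\F L^1$ bound $\|\Psi_\phi(\TT)(t)\|_{\F L^1}\leq t^j \|\phi\|_{\F L^1}^{2j+1}$ by induction on $j$. The base case $j = 0$ is $\|S(t)\phi\|_{\F L^1} = \|\phi\|_{\F L^1}$. For $j\geq 1$ one writes $\Psi_\phi(\TT) = \I[\Psi_\phi(\TT_1),\Psi_\phi(\TT_2),\Psi_\phi(\TT_3)]$ for the three subtrees $\TT_i\in\BT(j_i)$ rooted at the children of the root, with $j_1+j_2+j_3 = j-1$, applies unitarity and the algebra property under the time integral, inserts the inductive bounds, and uses $\int_0^t (t')^{j-1}\,dt' = t^j/j \leq t^j$. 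The homogeneity closes since $2(j_1+j_2+j_3)+3 = 2j+1$.

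Next I would establish the auxiliary per-tree $L^2$ bound $\|\Psi_\phi(\TT)(t)\|_{L^2}\leq t^j \|\phi\|_{\F L^1}^{2j}\|\phi\|_{L^2}$ by a coupled induction, placing the $\F L^1$ norm on two children and the $L^2$ norm on the third via $\|f_1\cj{f_2}f_3\|_{L^2}\leq \|f_1\|_{\F L^1}\|f_2\|_{\F L^1}\|f_3\|_{L^2}$ (from the $L^\infty$ embedding), feeding in the already-proven $\F L^1$ bound and the $L^2$ inductive hypothesis on the third child. Finally, for \eqref{nonlin11} itself, I would argue non-inductively at the root only: for $\TT\in\BT(j)$ with $j\geq 1$, one time integration reduces matters to $\|f_1\cj{f_2}f_3\|_{\F L^\infty}$, which on the Fourier side is controlled by $\|\widehat{f_1}\ast\widehat{\cj{f_2}}\ast\widehat{f_3}\|_{L^\infty}\leq \|\widehat{f_1}\|_{L^1}\,\|\widehat{\cj{f_2}}\ast\widehat{f_3}\|_{L^\infty}\leq \|f_1\|_{\F L^1}\|f_2\|_{L^2}\|f_3\|_{L^2}$, using Young ($L^1\ast L^\infty\to L^\infty$) and Cauchy--Schwarz ($L^2\ast L^2\to L^\infty$). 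Inserting the per-tree $\F L^1$ bound on one child and the per-tree $L^2$ bound on the other two gives total $\F L^1$-power $2j-1$ and $L^2$-power $2$, so after integrating $\|\Psi_\phi(\TT)(t)\|_{\F L^\infty}\leq t^j\|\phi\|_{\F L^1}^{2j-1}\|\phi\|_{L^2}^2$, and summation over $\BT(j)$ finishes the proof.

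I expect the only real subtlety to be the bookkeeping of the coupled induction: the $\F L^\infty$ estimate is driven by the $L^2$ estimate on the subtrees, which is itself driven by the $\F L^1$ estimate, so the three bounds must be organized in the correct logical order (first $\F L^1$, then $L^2$, then $\F L^\infty$), and the exact powers of $t$, $\|\phi\|_{\F L^1}$, and $\|\phi\|_{L^2}$ must be tracked at the root. No single step is hard; the care lies in deciding which norm to assign to which child and in checking that the homogeneities close.
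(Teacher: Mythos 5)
Your proposal is correct and follows essentially the same route as the paper: per-tree bounds for $\Psi_\phi(\TT)$ obtained from the unitarity of $S(t)$, Young's inequality (with the auxiliary $L^2$ bound, which the paper leaves implicit in its reference to ``unitarity of $S(t)$ in $\F L^1$ and $L^2$''), and one factor of $t$ per non-terminal node, followed by summation over $\BT(j)$ via Lemma \ref{LEM:tree}. The paper merely compresses into two lines what you spell out as the coupled $\F L^1 \to L^2 \to \F L^\infty$ induction, and your bookkeeping of the homogeneities is accurate.
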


\begin{proof}

Given $\TT \in \BT(j)$, 
$\Psi_\phi(\TT)$ contains $j = |\TT^0|$ 
(partially) iterated time integrations 
over a subset of $[0, t]^j$.
Then, 
from  the unitarity of $S(t)$ in $\F L^1$ and $L^2$
and Young's inequality, we have 
\begin{align}
\| \Psi_\phi (\TT)(t)\|_{\F L^1}
& \les t^{j} 
\| \phi\|_{\F L^1}^{2j+1}  
\label{nonlin12a},\\
\| \Psi_\phi (\TT)(t)\|_{\F L^\infty}
& \les t^{j} 
\| \phi\|_{\F L^1}^{2j-1} 
\| \phi\|_{L^2}^{2} 
\label{nonlin12}.
\end{align}

\noi
Hence, the desired estimates
\eqref{nonlin11a} and 
\eqref{nonlin11}
follow from 
\eqref{nonlin12a} and
\eqref{nonlin12}
with 
 \eqref{tree1} and 
Lemma \ref{LEM:tree}.
\end{proof}

\begin{lemma}\label{LEM:nonlin2}
There exists $C >0$ such that 
\begin{align}
\| \Xi_j (u_0 + \phi)(t)
- \Xi_j ( \phi)(t)\|_{\F L^p}
\leq C^j t^{j} 
\|u_0 \|_{\F L^p}
\big(
\| u_0\|_{\F L^1}^{2j}
+ \| \phi\|_{\F L^1}^{2j}\big)
\label{nonlin21}
\end{align}
	
\noi
for 
all $u_0 \in \F L^p(\M)\cap \F L^1(\M) $
with $1\leq p \leq \infty$, 
$ \phi \in \F L^{1}(\M)$, 
and  $j \in \N$.

\end{lemma}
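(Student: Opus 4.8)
The plan is to reduce everything to the multilinearity of the tree operators $\Psi(\TT;\cdot,\dots,\cdot)$ together with a Young-type estimate that carries a single $\F L^p$ factor through a tree. Fix $\TT \in \BT(j)$ and label its $2j+1$ terminal nodes $a_1,\dots,a_{2j+1}$ as in the definition of $\Psi(\TT;\phi_1,\dots,\phi_{2j+1})$. Since each slot of $\Psi(\TT;\cdot)$ is additive (the operator is linear or conjugate linear, hence additive, in each argument), I would first expand the difference $\Psi_{u_0+\phi}(\TT)-\Psi_\phi(\TT)$ by the standard multilinear telescoping identity, obtaining
\begin{align*}
\Psi_{u_0+\phi}(\TT)-\Psi_\phi(\TT)
= \sum_{k=1}^{2j+1}\Psi\big(\TT;\underbrace{u_0+\phi,\dots,u_0+\phi}_{k-1},\,u_0,\,\underbrace{\phi,\dots,\phi}_{2j+1-k}\big).
\end{align*}
Each summand carries exactly one distinguished $u_0$, in the $k$th slot, with the remaining slots filled by either $u_0+\phi$ or $\phi$.

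The key step is the per-tree estimate that, for an \emph{arbitrary} choice of distinguished slot $k$,
\begin{align*}
\|\Psi(\TT;\psi_1,\dots,\psi_{2j+1})(t)\|_{\F L^p}
\les t^j\,\|\psi_k\|_{\F L^p}\prod_{i\ne k}\|\psi_i\|_{\F L^1}.
\end{align*}
This is the $\F L^p$ analogue of \eqref{nonlin12a}, and I would prove it by routing the $\F L^p$ norm along the unique path from $a_k$ up to the root. Concretely, $S(t)$ is a unit-modulus Fourier multiplier and hence an isometry on every $\F L^p$, while on the Fourier side each Duhamel node $\I[u_1,u_2,u_3]$ becomes a time integral of a convolution (with a reflection/conjugation on the middle argument, which preserves all $L^p$ norms). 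Young's inequality in the form $L^p\ast L^1\ast L^1\hookrightarrow L^p$ then lets me measure, at each non-terminal node, the branch containing $a_k$ in $\F L^p$ and the two remaining branches in $\F L^1$; iterating this over the $j$ nodes of $\TT$ produces the factor $t^j$ from the $j$ nested time integrations over a subset of $[0,t]^j$, exactly as in the proof of Lemma \ref{LEM:nonlin1}. No arithmetic of the frequencies enters, only the convolution structure.

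With these ingredients in hand, the conclusion would be assembled as bookkeeping. Bounding every $\F L^1$ factor in a telescoping summand by $\|u_0\|_{\F L^1}+\|\phi\|_{\F L^1}$ and using $(a+b)^{2j}\le 4^j(a^{2j}+b^{2j})$, each summand is controlled by $C_1^j t^j\|u_0\|_{\F L^p}\big(\|u_0\|_{\F L^1}^{2j}+\|\phi\|_{\F L^1}^{2j}\big)$. Summing over the $2j+1$ telescoping terms (absorbing this polynomial factor via $2j+1\le 3^j$ for $j\ge 1$) and over the trees $\TT\in\BT(j)$ (whose number is at most $C_0^j$ by Lemma \ref{LEM:tree}), and collecting all the geometric constants into a single $C^j$, yields \eqref{nonlin21}.

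I expect the only genuine work to be the per-tree $\F L^p$ estimate, i.e.\ verifying that the $\F L^p$ norm can be carried through an arbitrarily prescribed terminal node while the remaining $2j$ factors are measured in $\F L^1$; this is what makes the general $p$ and the difference structure $\Xi_j(u_0+\phi)-\Xi_j(\phi)$ work. Once the convolution structure of $\I$ and the isometry of $S(t)$ on $\F L^p$ are set up, it is a routine induction on the tree in the spirit of Lemma \ref{LEM:nonlin1}, and the telescoping identity and the tree counting are then purely combinatorial.
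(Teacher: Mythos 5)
Your proposal is correct and takes essentially the same route as the paper: both arguments reduce, via multilinearity, to the per-tree estimate
$\| \Psi (\TT; \psi_1, \dots, \psi_{2j+1})(t)\|_{\F L^p}
\les t^{j} \| \psi_{k_*}\|_{\F L^p} \prod_{k \ne k_*} \| \psi_k\|_{\F L^1}$
with one distinguished slot (proved by unitarity of $S(t)$ on $\F L^p$ and Young's inequality), combined with the tree count of Lemma \ref{LEM:tree}. The only difference is bookkeeping: the paper expands the difference fully into at most $2^{2j+1}$ terms in which every slot is either $u_0$ or $\phi$ (with at least one $u_0$), whereas you telescope into $2j+1$ terms with a single distinguished $u_0$ slot and mixed entries elsewhere; both variants are harmlessly absorbed into the constant $C^j$.
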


\begin{proof}
From  \eqref{tree1}
and the multilinearity of $\Psi_\phi(\TT)$ in $\phi$, we have
\begin{align}
 \Xi_j (t; u_0 + \phi) - \Xi_j (\phi)
& =  \sum_{\TT \in \BT(j)} \big( \Psi_{u_0+ \phi} (\TT) - \Psi_\phi (\TT)\big)  \notag \\
& =  \sum_{\TT \in \BT(j)} 
\sum_{\phi_1, \dots, \phi_{2j+1}}
 \Psi (\TT; \phi_1, \dots, \phi_{2j+1}).
\label{nonlin22}
\end{align}

\noi
Here, 
the second summation in $\phi_1, \dots, \phi_{2j+1}$
takes over all possible combinations 
of $\phi_k = u_0$ or $\phi_k = \phi$
with at least one occurrence of $u_0$.
Note that we have
\begin{align}
\bigg|\sum_{\phi_1, \dots, \phi_{2j+1}} 1\, \bigg|
\leq 2^{2j+1}.
\label{nonlin23}
\end{align}

As in the proof of Lemma \ref{LEM:nonlin1},  given $\TT \in \BT(j)$, 
 there are $j = |\TT^0|$ 
time integrations 
in $ \Psi (\TT; \phi_1, \dots, \phi_{2j+1})$
over a subset of $[0, t]^j$.
Then, 
from  the unitarity of $S(t)$ in $\F L^p$ and $\F L^1$
and Young's inequality, we have 
\begin{align}
\| \Psi (\TT; \phi_1, \dots, \phi_{2j+1})(t)\|_{\F L^p}
\les t^{j}
 \| \phi_{k_*}\|_{\F L^p}
 \prod_{\substack{k = 1\\k \ne k_*}}^{2j+1} \| \phi_k\|_{\F L^1}.
\label{nonlin24}
\end{align}

\noi
Then, 
the desired estimate
\eqref{nonlin21}
follows from 
\eqref{nonlin22}, 
\eqref{nonlin23}, 
\eqref{nonlin24},
and 
Lemma \ref{LEM:tree}.
\end{proof}

\section{Proof of Theorem \ref{THM:1}}
\label{SEC:proof}

In this section, we present the proof of Theorem \ref{THM:1}.
We first reduce Theorem \ref{THM:1} to the following proposition.
Given 
 $\M = \R^d$ or $\T^d$, 
 we use $\S(\M)$ to denote the class of Schwartz functions if $\M = \R^d$
 and the class of $C^\infty$-functions if $\M = \T^d$.

\begin{proposition}\label{PROP:main}
Let $(s, d)$ be as in Theorem \ref{THM:1}.
Fix $u_0 \in \S(\M)$.
Then, 
given any $n \in \N$, 
there exist a 
solution $u_n$ to \eqref{NLS0} 
and $t_n  \in \big(0, \frac 1n\big) $ such that 
\begin{align}
 \| u_n(0) - u_0 \|_{H^s(\M)} < \tfrac 1n \qquad \text{ and } 
\qquad \| u_n(t_n)\|_{H^s(\M)} > n.
\label{main1}
\end{align}

\end{proposition}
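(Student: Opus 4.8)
The plan is to run the tree expansion $u_n=\sum_{j\ge0}\Xi_j(\phi_n)$ from \eqref{tree1} and to read off the inflation from the first Picard iterate $\Xi_1(\phi_n)=\I^3[S(t)\phi_n]$, treating everything else as error. First I would set the datum to be $u_n(0)=\phi_n:=u_0+\psi_n$ with $\psi_n\in\S(\M)$ a purely high-frequency perturbation to be designed; then $u_n(0)-u_0=\psi_n$, so the first bound in \eqref{main1} becomes $\|\psi_n\|_{H^s}<\tfrac1n$. Splitting $\Xi_1(\phi_n)=\Xi_1(\psi_n)+\bigl(\Xi_1(\phi_n)-\Xi_1(\psi_n)\bigr)$, the solution at the (small) evaluation time $t_n$ reads
\[
u_n(t_n)=S(t_n)\phi_n+\Xi_1(\psi_n)(t_n)+\bigl(\Xi_1(\phi_n)-\Xi_1(\psi_n)\bigr)(t_n)+\sum_{j\ge2}\Xi_j(\phi_n)(t_n),
\]
and the goal is to make $\|\Xi_1(\psi_n)(t_n)\|_{H^s}$ large while the three remaining terms stay much smaller. (By Lemma \ref{LEM:LWP} and the smoothness of $\phi_n$ the expansion converges on an interval containing $t_n$, which is all that the argument uses.)

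For the construction I would take $\widehat{\psi_n}=a\,\mathbf 1_{\Sigma}$ on $\ft\M$ (replacing point masses by smooth bumps when $\M=\R^d$), with $\Sigma=\Sigma_1\cup\Sigma_2$ a union of two cubes of $\sim R$ frequencies, $\Sigma_1$ centered at a frequency of size $N$ and $\Sigma_2$ centered at twice that frequency. The mechanism is a high-to-low cascade in $\Xi_1$: for $\xi_1,\xi_3\in\Sigma_1$ and $\xi_2\in\Sigma_2$ the output $\xi=\xi_1-\xi_2+\xi_3$ lands in the low band $|\xi|\les R^{1/d}$, and each such $\xi$ is produced by $\sim R^2$ resonant triples. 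Taking $t_n\sim N^{-2}$, the trivial modulation bound $\bigl|\,|\xi|^2-|\xi_1|^2+|\xi_2|^2-|\xi_3|^2\,\bigr|\les N^2$ makes the Duhamel time integral in \eqref{Duhamel1} comparable to $t_n$ and coherent in phase over these triples, so no cancellation occurs; weighting the resulting low-frequency coefficients by $\jb{\xi}^{2s}$ gives
\[
\|\Xi_1(\psi_n)(t_n)\|_{H^s}\ges t_n\,a^3R^2\Big(\textstyle\sum_{|\xi|\les R^{1/d}}\jb{\xi}^{2s}\Big)^{1/2},
\]
where the last factor is of size $(R^{1/d})^{(d+2s)/2}$ when $d+2s>0$.

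Next I would dispatch the errors. The linear term is harmless, $\|S(t_n)\phi_n\|_{H^s}=\|\phi_n\|_{H^s}\le\|u_0\|_{H^s}+\tfrac1n=O(1)$. For the tail I would go from $\F L^1$ to $H^s$ through $\|f\|_{H^s}\le\|f\|_{L^2}\le\|f\|_{\F L^1}^{1/2}\|f\|_{\F L^\infty}^{1/2}$ and feed in both estimates of Lemma \ref{LEM:nonlin1}; the geometric series then sums to $\les(t_n\|\phi_n\|_{\F L^1}^2)^2\|\phi_n\|_{L^2}$, small relative to the main term once $t_n\|\phi_n\|_{\F L^1}^2\les1$. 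The genuinely new term $\Xi_1(\phi_n)-\Xi_1(\psi_n)$, which carries all the dependence on the base point $u_0$, is precisely what Lemma \ref{LEM:nonlin2} is designed for: with a suitable exponent $p$ it is bounded by a constant times $t_n\|u_0\|_{\F L^p}\bigl(\|u_0\|_{\F L^1}^{2}+\|\psi_n\|_{\F L^1}^{2}\bigr)$, and since $u_0\in\S(\M)$ all its norms are $O(1)$, so this term too is subordinate to $\|\Xi_1(\psi_n)(t_n)\|_{H^s}$. In effect, the smoothness of $u_0$ reduces the general base point to the base point $0$.

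Finally I would choose $N,R,a$ as functions of $n$ to meet four requirements at once: smallness $aR^{1/2}N^s<\tfrac1n$; inflation $t_n a^3R^2(R^{1/d})^{(d+2s)/2}>n$; convergence/existence $t_n\|\phi_n\|_{\F L^1}^2\les t_n(aR)^2\les1$; and phase coherence, namely that the cluster side $R^{1/d}$ be $\les N$ so the modulation varies by $\les t_n^{-1}$ across resonant triples. The crux — and the step I expect to be the main obstacle — is to verify that these constraints are simultaneously solvable exactly in the stated range: balancing inflation against existence and coherence produces a condition in which the dimension enters only through the low-frequency volume exponent $d+2s$, and one finds a valid $(N,R,a)$ for every $s<0$ when $d\ge2$, whereas for $d=1$ the same bookkeeping forces $s\le-\tfrac12$, the endpoint $s=-\tfrac12$ being the delicate borderline case. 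Tracking these exponents carefully, and confirming $t_n\in(0,\tfrac1n)$ with $\|u_n(t_n)\|_{H^s}>n$ after subtracting the errors, completes the proof.
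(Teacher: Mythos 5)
Your construction and your treatment of the main and cross terms coincide with the paper's proof: your $(a,R,R^{1/d})$ are the paper's $(R, A^d, A)$, your lower bound on $\Xi_1(\psi_n)$ is the paper's Proposition \ref{PROP:Hs3}, and your use of Lemma \ref{LEM:nonlin2} with $p=2$ for $\Xi_1(\phi_n)-\Xi_1(\psi_n)$ is the paper's Lemma \ref{LEM:Hs1} (minor point: you need $t_n\ll N^{-2}$, not $t_n\sim N^{-2}$, so that all resonant phases have real part $\geq \tfrac12$). The genuine gap is your tail estimate, and it is fatal in part of the claimed range. Writing $f:=\big(\sum_{|\xi|\les R^{1/d}}\jb{\xi}^{2s}\big)^{1/2}$, your interpolation route gives
\begin{align*}
\bigg\|\sum_{j\ge2}\Xi_j(\phi_n)(t)\bigg\|_{H^s}\les \big(t\|\phi_n\|_{\F L^1}^2\big)^2\|\phi_n\|_{L^2}\sim (ta^2R^2)^2\, aR^{1/2},
\qquad\text{main term}\ \ges\ ta^3R^2 f,
\end{align*}
so the ratio of tail to main term is $(ta^2R^2)\cdot R^{1/2}/f$, where $R^{1/2}/f$ is \emph{polynomially} large in $R$: it is $\sim R^{-s/d}$ for $s>-\tfrac d2$, $\sim R^{1/2}(\log R)^{-1/2}$ for $s=-\tfrac d2$, and $\sim R^{1/2}$ for $s<-\tfrac d2$. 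Hence your claim that the tail is subordinate once $t\|\phi_n\|_{\F L^1}^2\les 1$ is false; you need the strictly stronger condition $ta^2R^2\ll fR^{-1/2}$, and adding it makes your four constraints unsolvable exactly where you anticipated delicacy. Concretely, at $d=1$, $s=-\tfrac12$: tail-domination together with inflation forces $a\gg nR^{1/2}/\log R$, which combined with smallness $aR^{1/2}N^{-1/2}\ll n^{-1}$ gives $R\les N^{1/2}\log N$; on the other hand smallness, inflation and $t\ll N^{-2}$ force $R^{1/2}(\log R)^{1/2}\gg n^4 N^{1/2}$, i.e.\ $R\ges N/\log N$ --- a contradiction for large $N$. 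The same bookkeeping also fails for $s$ close to $0$ in dimensions $d=2,3$ (roughly $s\ge-\tfrac25$, resp.\ $s\ge-\tfrac15$); only for $d\ge4$ does your version close. So, as written, the argument does not prove the proposition for $d=1,2,3$, and in particular not at the endpoint $(d,s)=(1,-\tfrac12)$.

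The missing idea is the paper's Lemma \ref{LEM:Hs2}: estimate the tail in $H^s$ by \emph{Fourier-support localization} rather than by $L^2$ via Plancherel. Since $\ft\psi_n$ is supported in two cubes of side $R^{1/d}$, the Fourier transform of $\Xi_j(\psi_n)$, being a $(2j+1)$-fold product under time integrations, is supported in a set of measure at most $C^jR$; since $\jb{\xi}^s$ is decreasing in $|\xi|$, this yields
\begin{align*}
\|\Xi_j(\psi_n)(t)\|_{H^s}
\le \|\jb{\xi}^s\|_{L^2(\supp\F[\Xi_j(\psi_n)])}\,\|\Xi_j(\psi_n)(t)\|_{\F L^\infty}
\les C^j f\,\|\Xi_j(\psi_n)(t)\|_{\F L^\infty}
\les C^j t^j (aR)^{2j}\, af,
\end{align*}
where the last step uses the $\F L^\infty$ bound of Lemma \ref{LEM:nonlin1} with $\|\psi_n\|_{L^2}^2\sim a^2R$; the $u_0$-dependent part of each $\Xi_j(\phi_n)$ is estimated separately in $L^2$ by Lemma \ref{LEM:nonlin2}, contributing $C^jt^j(aR)^{2j}\|u_0\|_{L^2}$. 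This replaces the lossy factor $aR^{1/2}$ in your bound by $af+\|u_0\|_{L^2}\sim af$, so the tail-to-main ratio becomes exactly $ta^2R^2$, i.e.\ subordination follows from the \emph{same} condition that guarantees convergence of the power series. With this one change, your parameter count closes in all three regimes $s<-\tfrac d2$, $s=-\tfrac d2$ (where one must take $a=1$ and adjust by logarithms), and $-\tfrac d2<s<0$, exactly as in Subsection \ref{SUBSEC:3.2} of the paper.
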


We first prove Theorem \ref{THM:1}, assuming Proposition \ref{PROP:main}.
Given $u_0 \in H^s$, 
let $\{u_{0, k}\}_{k\in \N} \subset \S(\M)$
be a sequence of smooth functions converging to $u_0$ in $H^s$, 
satisfying 
\begin{align}
\| u_{0, k} - u_{0} \|_{H^s(\M)} < \tfrac 1k.
\label{thm1a}
\end{align}

\noi
It follows from Proposition \ref{PROP:main}
that, for each $k \in \N$, 
there exists a sequence 
$\{ u_{k, n}\}_{n \in \N}$ of 
solutions to \eqref{NLS0}
such that 
\begin{align}
 \| u_{k, n}(0) - u_{0, k} \|_{H^s(\M)} < \tfrac 1n \qquad \text{ and } 
\qquad \| u_{k, n}(t_n)\|_{H^s(\M)} > n
\label{thm1b}
\end{align}

\noi
for all $n \in \N$.
Now, given $\eps >0$, set $u_\eps = u_{n, n}$
for some $n \geq 2 \eps^{-1}$.
Then, \eqref{thm1} follows from \eqref{thm1a} and \eqref{thm1b}.
This proves Theorem \ref{THM:1}.

The remaining part of this paper 
is devoted to the proof of Proposition \ref{PROP:main}.
In the following, we fix $u_0 \in \S(\M)$
and may suppress 
the dependence of various constants on $u_0$.

\subsection{Multilinear estimates}
In this subsection, 
we establish elementary, yet important multilinear estimates
in the relevant Sobolev $H^s$-norm.
In particular, 
Proposition \ref{PROP:Hs3}
provides a lower bound
on the trilinear term $\Xi_1(\phi_n)$
for appropriately chosen functions $\phi_n$.

Given $n \in \N$, 
fix $N = N(n) \gg 1$ (to be chosen later).
We define $\phi_n$ by setting
\begin{align}
\ft \phi_n (\xi)= R\big\{\ind_{Ne_1+ Q_A} (\xi)+ \ind_{2Ne_1+ Q_A}(\xi)\big\}, 
\label{phi1}
\end{align}
	
\noi
where $Q_A = \big[-\frac A2, \frac A2\big)^d$, 
$e_1 = (1, 0, \dots, 0)$, 
$R = R(N) \geq 1 $,  and $A = A(N)\gg 1$, satisfying
\begin{align}
RA^d  \gg \|u_0\|_{\F L^1}, 
\qquad \text{and}
\qquad A\ll N, 
\label{phi1a}
\end{align}

\noi
 are to be chosen later.
 Note that we have
\begin{align}
\| \phi_n\|_{H^s} \sim R A^\frac{d}{2} N^s
\qquad \text{and} \qquad \| \phi_n\|_{\F L^1} \sim R A^d, 
\label{phi2}
\end{align}

\noi
for any $s \in \R$.
Lastly, 
set 
\begin{align}
u_{0, n} = u_0 + \phi_n.
\label{phi3}
\end{align}

\noi
Let $u_n$ be 
 the corresponding solution to \eqref{NLS0} with $u_n|_{t = 0} = u_{0,n}$.
Lemmas \ref{LEM:tree} and \ref{LEM:nonlin1} with \eqref{phi2} guarantee the convergence
of the following power series expansion:
\begin{align}
 u_n & 
 = \sum_{j = 0}^\infty \Xi_j (u_{0, n}) 
  = \sum_{j = 0}^\infty \Xi_j (u_{0} + \phi_n), 
\label{tree3}
 \end{align}

\noi
on $[-T, T]$, 
as long as 
\[
T \les (\| u_0\|_{\F L^1} + R A^d)^{-2}
\sim (R A^d)^{-2},\]

\noi
where the last equivalence follows from \eqref{phi1a}.
Our main goal is to show that $u_n$ 
satisfies \eqref{main1}
by estimating each of $\Xi_j (u_{0, n})$ in \eqref{tree3}.

We now state the nonlinear estimates.
Keep in mind that implicit constants in  Lemmas \ref{LEM:Hs1}
and \ref{LEM:Hs2}
depend on (various norms of) $u_0$.

\begin{lemma}\label{LEM:Hs1}

Let
$\phi_n$ and  $u_{0, n}$ be as in \eqref{phi1} and \eqref{phi3}.
Let $s < 0$.
Then, the following estimates hold: 
\begin{align}
\| u_{0, n} - u_0\|_{H^s} & \les R A^{\frac {d}{2}}N^s, 
\label{Hs11}\\
\| \Xi_0(u_{0,n})\|_{H^s} & \les 1 + R A^{\frac {d}{2}}N^s,  \label{Hs12}\\
\| \Xi_1 (u_{0, n})(t) - \Xi_1 ( \phi_n)(t)\|_{H^s}
& \les  t \| u_0\|_{L^2} R^2A^{2d}.
\hphantom{XXX}
\label{Hs13}
\end{align}

\end{lemma}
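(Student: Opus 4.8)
The plan is to dispose of \eqref{Hs11} and \eqref{Hs12} immediately and to concentrate the actual work on \eqref{Hs13}. For \eqref{Hs11}, I would simply observe that $u_{0,n} - u_0 = \phi_n$ by the definition \eqref{phi3}, whence $\| u_{0,n} - u_0\|_{H^s} = \|\phi_n\|_{H^s} \sim R A^{d/2} N^s$ directly from \eqref{phi2}. For \eqref{Hs12}, since $\Xi_0(\phi) = S(t)\phi$ and $S(t)$ acts as a unitary on $H^s$ (its Fourier multiplier $e^{it|\xi|^2}$ being unimodular), I would write $\|\Xi_0(u_{0,n})\|_{H^s} = \|u_{0,n}\|_{H^s} \leq \|u_0\|_{H^s} + \|\phi_n\|_{H^s}$ and absorb the fixed quantity $\|u_0\|_{H^s}$ into the constant $1$, invoking \eqref{phi2} for the remaining term.

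The substance is \eqref{Hs13}. The starting observation is that $\BT(1)$ consists of a single tree, so $\Xi_1(\phi) = \I^3[S(t)\phi] = \I[S(t)\phi, S(t)\phi, S(t)\phi]$. Writing $\phi = u_0 + \phi_n$ and using trilinearity of $\I$, exactly as in \eqref{nonlin22}--\eqref{nonlin23} specialized to $j = 1$, the difference $\Xi_1(u_{0,n})(t) - \Xi_1(\phi_n)(t)$ splits into the $2^3 - 1 = 7$ terms of the form $\I[S(t)\psi_1, S(t)\psi_2, S(t)\psi_3]$ with each $\psi_i \in \{u_0, \phi_n\}$ and at least one $\psi_i = u_0$ (the conjugation on the middle slot is irrelevant for the norms below).

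The key simplification is that, since $s < 0$, one has $\jb{\xi}^{2s} \leq 1$ and hence $\|f\|_{H^s} \leq \|f\|_{L^2}$, so it suffices to bound each of the seven terms in $L^2$. I would estimate a single term by pulling out the outer propagator $S(t - t')$, which is unitary on $L^2$, bounding the time integral trivially by its length $t$, and applying H\"older in the form $\|fgh\|_{L^2} \leq \|f\|_{L^2}\|g\|_{L^\infty}\|h\|_{L^\infty}$. The decisive point, which produces the precise right-hand side of \eqref{Hs13}, is to place one $u_0$-factor in $L^2$, contributing $\|S(t')u_0\|_{L^2} = \|u_0\|_{L^2}$, and the remaining two factors in $L^\infty$ via the Wiener-algebra bound $\|S(t')\psi\|_{L^\infty} \les \|\psi\|_{\F L^1}$. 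For the term carrying one $u_0$ and two copies of $\phi_n$ this yields exactly $t\,\|u_0\|_{L^2}\|\phi_n\|_{\F L^1}^2 \sim t\,\|u_0\|_{L^2}R^2A^{2d}$ by \eqref{phi2}.

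Finally I would treat the terms with two or three $u_0$-factors, which carry fewer powers of $\|\phi_n\|_{\F L^1}$: placing one $u_0$ in $L^2$ and the rest in $L^\infty$ bounds each by $t\,\|u_0\|_{L^2}\|u_0\|_{\F L^1}^{m}\|\phi_n\|_{\F L^1}^{2-m}$ with $m \in \{1,2\}$, and the constraint $RA^d \gg \|u_0\|_{\F L^1}$ from \eqref{phi1a} lets me dominate each of these by $t\,\|u_0\|_{L^2}R^2A^{2d}$; summing the seven terms gives \eqref{Hs13}. I do not anticipate a genuine obstacle: the only delicate point is the bookkeeping that isolates $\|u_0\|_{L^2}$ by sending the smooth factor to $L^2$ and the high-frequency bumps to the Wiener algebra, together with the remark that $s < 0$ renders the $H^s$ weight harmless so that no frequency localization is needed here, in contrast with the lower bound of Lemma \ref{LEM:Hs2}, where the resonance estimate $\big||\xi|^2 - |\xi_1|^2 + |\xi_2|^2 - |\xi_3|^2\big| \les N^2$ of \eqref{Hs33} becomes essential.
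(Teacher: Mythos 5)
Your proposal is correct and follows essentially the same route as the paper: \eqref{Hs11} and \eqref{Hs12} are handled identically, and for \eqref{Hs13} the paper simply notes that $\|\cdot\|_{H^s}\leq \|\cdot\|_{L^2}$ for $s<0$ and invokes Lemma \ref{LEM:nonlin2} with $p=2$ together with \eqref{phi2} and \eqref{phi1a}, which is precisely the content of your argument. Your explicit seven-term decomposition, with H\"older in physical space and the embedding $\F L^1 \hookrightarrow L^\infty$ placing one $u_0$-factor in $L^2$ and the rest in the Wiener algebra, is just an unfolding of that lemma's proof at $j=1$ (where the Fourier-side Young inequality plays the role of your H\"older-plus-Wiener-algebra step), so the two arguments coincide in substance.
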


\begin{proof}
Noting that $\phi_n = u_{0, n} - u_0$, 
the first two estimates \eqref{Hs11} and \eqref{Hs12}
follow from  \eqref{phi2}
and the unitarity of $S(t)$.
Since $s < 0$, 
it follows from 
Lemma \ref{LEM:nonlin2} (with $p = 2$)
with  \eqref{phi2} and \eqref{phi1a} that 
\begin{align*}
\| \Xi_1 (u_{0, n})(t) - \Xi_1 ( \phi_n)(t)\|_{H^s}
& \les  t \| u_0\|_{L^2}\big(\|u_0\|_{\F L^1}^2 + \| \phi_n\|_{\F L^1}^2\big)\notag\\
& \les t \| u_0\|_{L^2}(1 + R^2A^{2d})\notag\\
& \les t \| u_0\|_{L^2} R^2A^{2d}.
\end{align*}

\noi
This proves \eqref{Hs13}.
\end{proof}

\begin{lemma}\label{LEM:Hs2}

Let
$\phi_n$ and  $u_{0, n}$ be as in \eqref{phi1} and \eqref{phi3}.
Let $s < 0$.
Then, there exists $C >0$ such that 
\begin{align}
\| \Xi_j (u_{0, n})(t)\|_{H^s}
\leq C^j t^{j} 
(RA^d)^{2j} \big\{ R f(A)+ \| u_0\|_{L^2}\big\}
\label{Hs21}
\end{align}
	
\noi
for any $j \in \N$, 
where $f(A)$ is given by 
\begin{align}
f(A) = 
\begin{cases}
1, & \text{if } s < -\frac d 2, \\
(\log A)^\frac 12, & \text{if } s = -\frac d 2, \\
A^{\frac{d}{2}+s} & \text{if } s > -\frac d 2.
\end{cases}
\label{Hs21a}
\end{align}

\end{lemma}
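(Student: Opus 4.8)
The plan is to separate the contribution of the high-frequency bump $\phi_n$ from that of the fixed datum $u_0$ by writing
\[
\Xi_j(u_{0,n}) = \Xi_j(\phi_n) + \big(\Xi_j(u_0 + \phi_n) - \Xi_j(\phi_n)\big),
\]
and to estimate the two pieces by genuinely different mechanisms. The remainder is where $u_0$ enters, and it is controlled by the already-established difference estimate. Since $s < 0$ gives $\|\cdot\|_{H^s} \leq \|\cdot\|_{L^2} = \|\cdot\|_{\F L^2}$, I would apply Lemma \ref{LEM:nonlin2} with $p = 2$, use $\|u_0\|_{\F L^2} = \|u_0\|_{L^2}$ (Plancherel), and absorb $\|u_0\|_{\F L^1}$ into $\|\phi_n\|_{\F L^1} \sim RA^d$ via \eqref{phi1a} and \eqref{phi2}, to obtain
\[
\|\Xi_j(u_0+\phi_n)(t) - \Xi_j(\phi_n)(t)\|_{H^s} \les C^j t^j \|u_0\|_{L^2}(RA^d)^{2j}.
\]
This is exactly the $\|u_0\|_{L^2}$-term in \eqref{Hs21}.

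For the main term $\Xi_j(\phi_n)$ I would combine the $\F L^\infty$-bound of Lemma \ref{LEM:nonlin1} with the geometry of the frequency support. Since for $s < 0$ one has $\|g\|_{H^s} \leq \|\ft g\|_{L^\infty}\big(\int_{E}\jb{\xi}^{2s}\,d\xi\big)^{1/2}$ whenever $\ft g$ is supported in $E$, and since \eqref{nonlin11} together with $\|\phi_n\|_{\F L^1}\sim RA^d$ and $\|\phi_n\|_{L^2}^2 \sim R^2A^d$ gives
\[
\|\Xi_j(\phi_n)(t)\|_{\F L^\infty} \les C^j t^j (RA^d)^{2j} R,
\]
it suffices to prove the support bound $\int_{E_j}\jb{\xi}^{2s}\,d\xi \les C^j f(A)^2$, where $E_j$ is the frequency support of $\Xi_j(\phi_n)(t)$. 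Note that the propagators $S(\cdot)$ and the time integrations do not enlarge this support, so $E_j$ is contained in the $(2j+1)$-fold signed sumset of $\supp\ft\phi_n = (Ne_1 + Q_A)\cup(2Ne_1 + Q_A)$.

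The heart of the matter is the analysis of $E_j$, and this is where I expect the trichotomy defining $f(A)$ in \eqref{Hs21a} to originate. Any $\xi \in E_j$ has the form $\sum_k \sigma_k\eta_k$ with $\sigma_k \in \{+1,-1\}$ ($j+1$ plus signs, $j$ minus signs, dictated by the conjugations) and $\eta_k \in \supp\ft\phi_n$. Writing $\eta_k = m_k N e_1 + \theta_k$ with $m_k \in \{1,2\}$ and $\theta_k \in Q_A$, the first coordinate of $\xi$ equals $MN + O(jA)$ with $M = \sum_k \sigma_k m_k \in \Z$ and $|M| \les j$, while the remaining coordinates lie in a cube of side $\les jA$. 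Hence $E_j$ is a union of $\les C^j$ cubes of side $\les C jA$ centered on the $e_1$-axis at heights $MN$. The single cluster $M = 0$ sits at the origin, and its contribution is
\[
\int_{|\xi| \les jA}\jb{\xi}^{2s}\,d\xi \sim
\begin{cases} 1, & s < -\tfrac d2, \\ \log(jA), & s = -\tfrac d2, \\ (jA)^{d+2s}, & s > -\tfrac d2, \end{cases}
\]
which in each case is $\les C^j f(A)^2$ after absorbing the powers of $j$ (and using $\log A \geq 1$) into $C^j$; this is precisely the definition of $f(A)$.

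Finally I would check that the clusters with $M \neq 0$ are negligible, which is where the separation hypothesis $A \ll N$ from \eqref{phi1a} is used. When such a cube is separated from the origin its contribution is $\les (jA)^d (|M|N)^{2s}$, and summing the $\les C^j$ cubes gives $\les C^j (jA)^d N^{2s}$; since $s < 0$ and $A \ll N$ force $(N/A)^{2s} < 1$ (and $A^d N^{2s} \les N^{d+2s} < 1$ when $s < -\tfrac d2$), this is $\les C^j f(A)^2$ in every case. In the degenerate regime $jA \gtrsim N$, where a high cluster can overlap the origin, I would simply dominate its contribution by $\int_{|\xi|\les jA}\jb{\xi}^{2s}\,d\xi$, reducing to the $M=0$ computation. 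Collecting the low- and high-frequency contributions yields $\int_{E_j}\jb{\xi}^{2s}\,d\xi \les C^j f(A)^2$, and combining this with the two displayed bounds above establishes \eqref{Hs21}. The only real obstacle is the support analysis in the third paragraph; everything else is bookkeeping built on Lemmas \ref{LEM:nonlin1}, \ref{LEM:nonlin2}, and \ref{LEM:tree}.
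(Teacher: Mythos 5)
Your proof is correct, and its skeleton coincides with the paper's: the same decomposition $\Xi_j(u_{0,n}) = \Xi_j(\phi_n) + \big(\Xi_j(u_0+\phi_n)-\Xi_j(\phi_n)\big)$, the same treatment of the difference term via Lemma \ref{LEM:nonlin2} with $p=2$ and the embedding $H^s \subset L^2$ for $s<0$, and the same bound for the main term, namely the $\F L^\infty$-estimate of Lemma \ref{LEM:nonlin1} multiplied by $\|\jb{\xi}^s\|_{L^2_\xi(E_j)}$, where $E_j = \supp \F[\Xi_j(\phi_n)]$. The one place where you diverge is the estimate of $\|\jb{\xi}^s\|_{L^2_\xi(E_j)}$: you locate $E_j$ explicitly as $\les C^j$ cubes of side $\les CjA$ centered at the points $MNe_1$ with $|M|\les j$, and then run a case analysis (the $M=0$ cluster, the far clusters, and the degenerate regime $jA \gtrsim N$), invoking the separation $A \ll N$ from \eqref{phi1a} to dismiss the far clusters. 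The paper short-circuits all of this with a rearrangement observation: since $s<0$ makes $\jb{\xi}^s$ radially decreasing, the integral of $\jb{\xi}^{2s}$ over \emph{any} set of measure $\les C^jA^d$ is dominated by its integral over an origin-centered cube of comparable measure; hence only the bound $|E_j| \les C^j A^d$ matters, and the location of the clusters is irrelevant. Your own fallback in the degenerate regime --- dominating a cluster's contribution by the corresponding origin-centered integral --- is exactly this observation applied cluster by cluster; had you applied it to all of $E_j$ at once, your third and fourth paragraphs (the signed-sumset geometry, the $M\neq 0$ bookkeeping, and the use of $A \ll N$) would collapse to one line. So your route is sound but does strictly more work than necessary, and it consumes the hypothesis \eqref{phi1a} at a step where the paper's argument needs nothing beyond the measure of $\supp\ft\phi_n$.
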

	
\begin{proof}

From \eqref{phi1}, 
we see that $\supp \ft \phi_n$
consists of two disjoint cubes of volume $\sim A^d$.
Given $\TT \in \BT(j)$, 
$\Psi_\phi(\TT)$ is basically 
a $(2j+1)$-fold product of 
$S(t) \phi$ and its complex conjugate under
some integral operator in time.
Hence, the
spatial support of $\F[ \Psi_\phi(\TT)]$
consists of (at most) $2^{2j+1}$
cubes of volume $\sim A^d$.
Then, 
from \eqref{tree1} and Lemma \ref{LEM:tree}, we have
\begin{align*}
\big|\supp \F[\Xi_j(\phi_n)]\big| \leq C^j A^d \leq c | C^j Q_A|
\end{align*}

\noi
for some $c, C>0$.
Since $s < 0$, 
$\jb{\xi}^s$ is a decreasing function in  $|\xi|$.
Hence, we obtain
\begin{align}
\| \jb{\xi}^s\|_{L^2_\xi(\supp \F[\Xi_j (\phi_n)])}
& \leq \| \jb{\xi}^s\|_{L^2_\xi(c C^j Q_A)}\notag\\
& \les 
\begin{cases}
1, & \text{if } s < -\frac d 2, \\
C^j (\log A)^\frac 12, & \text{if } s = -\frac d 2, \\
C^j A^{\frac{d}{2}+s} & \text{if } s > -\frac d 2.
\end{cases}
\label{Hs22}
\end{align}

\noi
By Lemma \ref{LEM:nonlin1} with \eqref{phi2}
and \eqref{Hs22}, 
we have 
\begin{align*}
\| \Xi_j (\phi_n)(t) \|_{H^s}
& \leq \| \jb{\xi}^s\|_{L^2_\xi(\supp \F[\Xi_j (\phi_n)])}
\| \Xi_j (\phi_n)(t) \|_{\F L^\infty} \notag\\
& \leq C^j t^{j} 
(RA^d)^{2j} R  f(A).
\end{align*}

\noi
On the other hand, 
by Lemma \ref{LEM:nonlin2} with \eqref{phi2} and \eqref{phi1a}, 
we have 
\begin{align}
\| \Xi_j (u_0 + \phi_n)(t)
 - \Xi_j ( \phi_n)(t)\|_{H^s}
& \leq \| \Xi_j (u_0 + \phi_n)(t)
- \Xi_j ( \phi_n)(t)\|_{L^2} \notag\\
& \leq C^j t^{j} 
\|u_0 \|_{L^2}
\big(
\| u_0\|_{\F L^1}^{2j}
+ \| \phi_n\|_{\F L^1}^{2j}\big)\notag\\
&  \les C^j t^{j} 
\| u_0\|_{L^2} (RA^d)^{2j}.
\label{Hs24}
\end{align}
	
\noi
Therefore, 
\eqref{Hs21}
follows from \eqref{Hs22} and \eqref{Hs24}.
\end{proof}

Next, we state a crucial proposition, 
establishing a lower bound on $\Xi_1(\phi_n)$.
This  proposition   played an important role
in establishing the norm inflation at 
the zero initial condition in \cite{Kishimoto}
and will also play an important role
in establishing norm inflation based at general initial data.
The proof in \cite{Kishimoto} exploits 
the high-to-low energy transfer mechanism in $\Xi_1(\phi_n)$.
We include the proof for readers' convenience.

\begin{proposition}\label{PROP:Hs3}
Let
$\phi_n$  be as in \eqref{phi1} and $s < 0$.
Then, for $0 < t \ll N^{-2}$, we have
\begin{align}
\|  \Xi_1 ( \phi_n)(t)\|_{H^s}
\ges t  R^3 A^{2d} \cdot f(A),
\label{Hs31}
\end{align}

\noi
where $f(A)$ is the function defined in \eqref{Hs21a}.
\end{proposition}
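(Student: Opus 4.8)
The plan is to compute $\ft{\Xi_1(\phi_n)}$ explicitly and to extract a coherent, non-oscillatory low-frequency contribution produced by a high-to-low frequency interaction. Since $\BT(1)$ consists of a single tree, \eqref{tree1} and \eqref{tree1a} give $\Xi_1(\phi_n) = \I^3[S(t)\phi_n]$. Recalling \eqref{Duhamel1} and that $S(t)$ acts as multiplication by $e^{it|\xi|^2}$ on the Fourier side, a direct computation yields
\begin{align}
\ft{\Xi_1(\phi_n)}(t, \xi)
= i\, e^{it|\xi|^2}
\int_{\xi = \xi_1 - \xi_2 + \xi_3}
\bigg( \int_0^t e^{-it'\mu}\, dt' \bigg)
\ft\phi_n(\xi_1)\, \cj{\ft\phi_n(\xi_2)}\, \ft\phi_n(\xi_3)\, d\xi_1 d\xi_3,
\label{four1}
\end{align}
where $\xi_2 = \xi_1 + \xi_3 - \xi$ and $\mu := |\xi|^2 - |\xi_1|^2 + |\xi_2|^2 - |\xi_3|^2$; the inner time integral equals $\frac{\sin(t\mu)}{\mu} - i\,\frac{1 - \cos(t\mu)}{\mu}$.

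Next I would isolate the single interaction reaching $\xi \approx 0$. By \eqref{phi1}, $\supp\ft\phi_n$ is the union of the two cubes $Ne_1 + Q_A$ and $2Ne_1 + Q_A$, so the center of the output frequency $\xi = \xi_1 - \xi_2 + \xi_3$ is an integer multiple of $Ne_1$, and it vanishes only when $\xi_1, \xi_3 \in Ne_1 + Q_A$ and $\xi_2 \in 2Ne_1 + Q_A$; every other assignment of the three frequencies places the output near a nonzero multiple of $Ne_1$, hence outside a fixed cube $Q = \frac12 Q_A$ about the origin. On this interaction $\ft\phi_n(\xi_1)\cj{\ft\phi_n(\xi_2)}\ft\phi_n(\xi_3) = R^3 > 0$, so no cancellation occurs among the amplitudes. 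Moreover, since $|\xi_j| \les N$ (as $A\ll N$), the elementary bound $\big||\xi|^2 - |\xi_1|^2 + |\xi_2|^2 - |\xi_3|^2\big| \les N^2$ recorded in the first remark gives $|t\mu| \ll 1$ uniformly on this region once $0 < t \ll N^{-2}$, so that
\begin{align}
\Re \int_0^t e^{-it'\mu}\, dt' = \frac{\sin(t\mu)}{\mu} \geq \frac t2 > 0.
\label{time1}
\end{align}

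Because the factor $i\,e^{it|\xi|^2}$ in \eqref{four1} is unimodular, $\big|\ft{\Xi_1(\phi_n)}(t,\xi)\big|$ is at least the absolute value of the real part of the integral; combining this with \eqref{time1}, for every $\xi \in Q$ I then obtain
\begin{align}
\big| \ft{\Xi_1(\phi_n)}(t,\xi)\big|
\geq \frac t2\, R^3\, \big|\big\{ (\xi_1,\xi_3): \xi_1, \xi_3 \in Ne_1 + Q_A, \ \xi_1 + \xi_3 - \xi \in 2Ne_1 + Q_A \big\}\big|
\ges t\, R^3 A^{2d},
\label{point1}
\end{align}
where the last bound holds because, writing $\xi_1 = Ne_1 + a_1$ and $\xi_3 = Ne_1 + a_3$, the constraint becomes $a_1 + a_3 - \xi \in Q_A$ with $a_1, a_3 \in Q_A$, whose solution set has measure comparable to $|Q_A|^2 \sim A^{2d}$ for $\xi \in Q$. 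Finally, restricting the $H^s$-norm to $Q$ and invoking \eqref{point1},
\begin{align*}
\| \Xi_1(\phi_n)(t)\|_{H^s}
\geq \bigg( \int_Q \jb{\xi}^{2s}\, \big| \ft{\Xi_1(\phi_n)}(t,\xi)\big|^2 d\xi\bigg)^{\frac12}
\ges t\, R^3 A^{2d} \bigg( \int_{Q} \jb{\xi}^{2s}\, d\xi\bigg)^{\frac12}
\sim t\, R^3 A^{2d} f(A),
\end{align*}
since $\big(\int_{|\xi| \les A}\jb{\xi}^{2s}\, d\xi\big)^{1/2} \sim f(A)$, the three regimes in \eqref{Hs21a} arising from the convergence ($s < -\frac d2$), the logarithmic divergence ($s = -\frac d2$), and the power growth $A^{\frac d2 + s}$ ($-\frac d2 < s < 0$) of this integral. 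This proves \eqref{Hs31}.

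I expect the main obstacle to be the coherence analysis behind \eqref{time1} and \eqref{point1}: one must verify that exactly one of the eight frequency assignments reaches the low-frequency cube $Q$, so that contributions from distinct frequency blocks cannot interfere destructively, and that the oscillatory time integral keeps real part $\ges t$ throughout the interaction set --- the one place where the smallness $t \ll N^{-2}$ is spent against $|\mu| \les N^2$. The remaining steps, namely the measure computation in \eqref{point1} and the evaluation of $\int_{|\xi| \les A}\jb{\xi}^{2s}\, d\xi$, are routine.
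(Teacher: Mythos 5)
Your proof is correct and takes essentially the same route as the paper's: the explicit Fourier-side formula \eqref{Hs32}, the lower bound $\Re \int_0^t e^{-it'\mu}\,dt' \geq \frac{t}{2}$ for $0 < t \ll N^{-2}$, the coherent high-to-low interaction $\xi_1, \xi_3 \in Ne_1 + Q_A$, $\xi_2 \in 2Ne_1 + Q_A$ yielding the pointwise bound $|\F[\Xi_1(\phi_n)(t)](\xi)| \ges t R^3 A^{2d}$ on a cube of side $\sim A$ about the origin, and finally $\| \jb{\xi}^s\|_{L^2_\xi(Q_A)} \sim f(A)$. The only cosmetic difference is that you verify the convolution-type measure bound by hand on $\tfrac12 Q_A$ (and justify positivity via uniqueness of the low-frequency interaction), whereas the paper obtains \eqref{Hs35} on all of $Q_A$ by citing Lemma \ref{LEM:conv} on convolutions of characteristic functions of cubes.
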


First,  recall the following simple lemma on the 
convolution of characteristic functions of cubes.

\begin{lemma}\label{LEM:conv}
Let $d \geq 1$.
Then, there exists $c_d$ such that 
\begin{align*}
\ind_{a + Q_A}* \ind_{b + Q_A} (\xi)\geq c_d A^d \ind_{a+b+Q_A}(\xi)
\end{align*}
		
\noi
for all $a, b, \xi \in \ft \M$ and $A \geq 1 $.
Here, $\ft \M$ denotes
 the Pontryagin dual of $\M$
 defined in \eqref{dual}.

\end{lemma}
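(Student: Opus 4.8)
The plan is to realize the convolution as the measure of the overlap of two translated cubes, and then to reduce everything to a one-dimensional, coordinate-by-coordinate computation. Write $d\mu$ for Lebesgue measure when $\ft\M = \R^d$ and for the counting measure when $\ft\M = \Z^d$; both are translation invariant, so the change of variables $\eta \mapsto s = \eta - a$ gives
\begin{align*}
\ind_{a+Q_A} * \ind_{b+Q_A}(\xi)
= \int_{\ft\M} \ind_{Q_A}(s)\, \ind_{Q_A}(r - s)\, d\mu(s)
= \mu\big(\{s \in \ft\M : s \in Q_A \text{ and } r - s \in Q_A\}\big),
\end{align*}
where $r := \xi - a - b$. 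If $\xi \notin a+b+Q_A$, then the right-hand side of the claimed inequality is $0$ and there is nothing to prove, so I may assume $\xi \in a+b+Q_A$, i.e.\ $r \in Q_A$; in the periodic case this also records that $r \in \Z^d$.

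Since $Q_A = \prod_{i=1}^d \big[-\tfrac A2, \tfrac A2\big)$ is a product cube, the admissible set for $s$ factors over the coordinates, so its $\mu$-measure is the product over $i$ of the one-dimensional measures of $J_i := \big[-\tfrac A2, \tfrac A2\big) \cap \big(r_i - \tfrac A2, r_i + \tfrac A2\big]$. The key elementary point is that $r \in Q_A$ forces $|r_i| \le \tfrac A2$, so each $J_i$ has length $A - |r_i| \ge \tfrac A2$. In the Euclidean case this length is exactly the Lebesgue measure of $J_i$, whence the product is at least $\big(\tfrac A2\big)^d = 2^{-d} A^d$, and the lemma holds with $c_d = 2^{-d}$.

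In the periodic case I instead count integers in each $J_i$, using $r_i \in \Z$. A short sign analysis, which I would split into $r_i \ge 0$ and $r_i < 0$ and relate by the reflection symmetry $s \mapsto r - s$, shows that $J_i$ contains the whole block of consecutive integers $\{0, 1, \dots, \lceil \tfrac A2\rceil - 1\}$ when $r_i \ge 0$ (and the reflected block $\{0, -1, \dots\}$ when $r_i < 0$); indeed each such $n$ satisfies $-\tfrac A2 \le n < \tfrac A2$ and $-\tfrac A2 \le r_i - n < \tfrac A2$ directly from $0 \le n < \tfrac A2$ and $r_i \in \big[-\tfrac A2, \tfrac A2\big)$. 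Hence $J_i$ contains at least $\lceil \tfrac A2\rceil \ge \tfrac A2$ integers, and in particular is nonempty since $A \ge 1$. Taking the product over the $d$ coordinates again yields a lower bound of $2^{-d} A^d$, so the same constant $c_d = 2^{-d}$ serves both geometries.

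The only delicate point is the bookkeeping of the half-open endpoints of $Q_A$ in the lattice-point count: because $J_i$ is the intersection of $\big[-\tfrac A2, \tfrac A2\big)$ with $\big(r_i - \tfrac A2, r_i + \tfrac A2\big]$, one must verify that the displayed integer block genuinely lies in $J_i$ and that its cardinality stays comparable to $A$ \emph{uniformly} for all $A \ge 1$, rather than collapsing in the borderline regime where $A$ is close to $1$. I expect this endpoint check to be the only technical care needed; it is routine, and the integrality of $r$ together with the reflection symmetry makes it clean.
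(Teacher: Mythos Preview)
Your argument is correct: the reduction to a coordinate-wise overlap computation via translation invariance is the natural approach, and your endpoint bookkeeping in both the Euclidean and lattice cases goes through with $c_d = 2^{-d}$. The paper does not actually supply a proof of this lemma---it is merely stated as a ``simple'' recalled fact---so there is nothing to compare against; your write-up fills in exactly what the paper leaves to the reader.
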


We now present the proof of Proposition \ref{PROP:Hs3}.

\begin{proof}[Proof of Proposition \ref{PROP:Hs3}]

From \eqref{tree1}
with \eqref{tree1a}, we have
\begin{align*}
\Xi_1(\phi_n)(t)
= i 
\int_0^t S(t - t') [S(t')\phi_n\cj{S(t')\phi_n} S(t')\phi_n]dt'.
\end{align*}

\noi
Taking the Fourier transform, we have
\begin{align}
\F\big[\Xi_1(\phi_n)(t)\big](\xi)
= i e^{i |\xi|^2 t }
\intt_{\xi = \xi_1 - \xi_2 + \xi_3}\int_0^t & e^{- i t' (|\xi|^2 - |\xi_1|^2+|\xi_2|^2-|\xi_3|^2)}dt' 
\notag \\
& \ft \phi_n(\xi_1)
\cj{\ft \phi_n(\xi_2)} \ft\phi_n(\xi_3)
d\xi_1 d\xi_2.
\label{Hs32}
\end{align}

\noi
From \eqref{phi1}, 
we have $|\xi_j| \les N$ for $\xi_j \in \supp \ft \phi_n$.
Then, 
with
$\xi = \xi_1 - \xi_2 + \xi_3$, 
 we have
\begin{align}
|t' (|\xi|^2 - |\xi_1|^2+|\xi_2|^2-|\xi_3|^2)| \ll 1
\label{Hs33}
\end{align}

\noi
for $0< t ' \ll N^{-2}$.
Under the same condition, 
we have
\begin{align}
\Re \int_0^t  e^{- i t' (|\xi|^2 - |\xi_1|^2+|\xi_2|^2-|\xi_3|^2)}dt' 
\geq \frac 12 t. 
\label{Hs34}
\end{align}

\noi
Hence, it follows from \eqref{Hs32}, 
\eqref{Hs34}, 
and 
Lemma \ref{LEM:conv}
with \eqref{phi1}
that 
\begin{align}
|\F\big[\Xi_1(\phi_n)(t)\big](\xi)|
\ges t R^3 A^{2d} \cdot \ind_{Q_A}(\xi).
\label{Hs35}
\end{align}

\noi
Lastly, 
 noting that
$\| \jb{\xi}^s\|_{L^2_\xi( Q_A)}
\sim f(A)$,
we obtain 
\eqref{Hs31}.
\end{proof}

\subsection{Proof of Proposition \ref{PROP:main}}
\label{SUBSEC:3.2}

In this subsection, we prove Proposition \ref{PROP:main}.
We claim that it suffices to show that, 
given $n \in \N$, the following properties hold:
\begin{align*}
 \textup{(i)} & \quad R A^\frac{d}{2} N^s \ll \tfrac{1}{n}, \\ 
 \textup{(ii)} & \quad T R^2 A^{2d} \ll 1,  \\ 
 \textup{(iii)} & \quad T R^3 A^{2d} \cdot f(A) \gg n,\\ 
 \textup{(iv)} & \quad T R^3 A^{2d} \cdot f(A)\gg T^2 R^5 A^{4d} \cdot f(A), \\
 \textup{(v)} & \quad T \ll N^{-2}, \\
 \textup{(vi)} & \quad \eqref{phi1a}
 \quad \text{and}\quad  Rf(A) \gg \|u_0\|_{L^2}
 \end{align*}

\noi
for some $A, R, T$, and $N$, depending on $n$.
Here, $f(A)$ is as in \eqref{Hs21a}.
As before, implicit constants may depend on
(fixed) $u_0 \in \S(\M)$.

We first show how these conditions (i)-(v) 
imply Proposition \ref{PROP:main}.
The first condition (i) 
together with 
Lemma \ref{LEM:Hs1} 
verifies the first estimate in \eqref{main1}.

From \eqref{phi2} and \eqref{phi3} with \eqref{phi1a}, we have
\begin{align*}
\| u_{0, n} \|_{\F L^1} \sim R A^d.
\end{align*}
	
\noi	
Then, the second condition (ii) with Lemma \ref{LEM:LWP}
guarantees local existence of the solution $u_n$
on $[-T, T]$ with $u_n|_{t = 0} = u_{0, n}$
and Lemmas \ref{LEM:tree} and \ref{LEM:nonlin1} yield the convergence of the power series expansion \eqref{tree3}
in $C([-T, T]; \F L^1)$. 

Assuming the  conditions (ii) and (vi), 
Lemma \ref{LEM:Hs2} yields
\begin{align}
 \bigg\|  \sum_{j = 2}^\infty \Xi_j (u_{0, n}) (T)\bigg\|_{H^s}
& \les T^2 R^4 A^{4d} \big\{ R  f(A) + \| u_0\|_{L^2}\big\} \notag\\
& \sim T^2 R^5 A^{4d} \cdot   f(A).
\label{Z1}
 \end{align}

\noi
Then, 
assuming the conditions (ii), (iii), (iv),  (v), and (vi), 
it follows from Lemma \ref{LEM:Hs1},  
Proposition \ref{PROP:Hs3}, and \eqref{Z1}
with the power series \eqref{tree3}
and \eqref{phi1a}
that 
\begin{align*}
\| u_n(T)\|_{H^s} & \geq 
\|  \Xi_1 ( \phi_n)(T)\|_{H^s}
- \| \Xi_0(u_{0,n})\|_{H^s} \notag \\
& \hphantom{X} - \| \Xi_1 (u_{0, n})(T) - \Xi_1 ( \phi_n)(T)\|_{H^s}
-\bigg\|  \sum_{j = 2}^\infty \Xi_j (u_{0, n}) (T)\bigg\|_{H^s} \notag\\
& \ges
T R^3 A^{2d}\cdot f(A)
- (1 + R A^\frac{d}{2} N^s) \notag\\
&\hphantom{XXXX}- 
TR^2 A^{2d} \|u_0\|_{L^2}- T^2 R^5 A^{4d} \cdot f(A)\notag\\
& \sim 
T R^3 A^{2d}\cdot f(A)
\gg n.
\end{align*}

\noi
This verifies the second estimate in \eqref{main1}
at time $t_n := T$.
Lastly, by choosing $N = N(n)$ sufficiently large, 
the  condition (v) guarantees that $t_n \in (0, \frac 1n)$.
This completes the proof of Proposition \ref{PROP:main}.	
	
\medskip

Therefore, it remains to 
verify the conditions (i)-(vi).
We divide the argument into the  following three cases:
(1) $s < -\frac d2$, 
(2) $s = -\frac d2$,
and (3) $  -\frac d2<s <0$.
Note that the last case is relevant only for $d \geq 2$.
Given $n \in \N$, we first 
choose appropriate $A, R,$ and  $T$ 
in terms of $N$.
Then, we choose
$N = N(n) \gg 1$
so that all the conditions (i)-(vi) are satisfied.
Note that the condition (iv) in fact 
follows from the condition (ii), 
and thus we only verify the condition (i), (ii), (iii), (v), and (vi)
in the following.

\medskip

\noi
$\bullet$
{\bf Case 1:} $s < - \frac  d 2$.
\quad 
In this case, we set
\begin{align}
A = N^{\frac 1 d (1 - \dl)}, 
\quad 
R = N^{2 \dl}, 
\quad \text{and} 
\quad
T = N^{ - 2 - 3 \dl}, 
\label{Z2}
\end{align}

\noi
where $\dl>0 $ is sufficiently small such that $s < - \frac 12 - \frac 32 \dl$.
The conditions (v) and (vi) are trivially satisfied for $N \gg 1$.
By choosing 
$N = N(n)$ sufficiently large, we have
\begin{align*}
R A^\frac{d}{2} N^s & = N^{s  + \frac 12 + \frac 32 \dl}\ll \tfrac 1n,
\\
TR^2 A^{2d} & = N^{-\dl} \ll 1, \\
TR^3 A^{2d} & = N^{\dl} \gg n, 
\end{align*}

\noi
verifying the conditions (i), (ii), and (iii), respectively.

\medskip

\noi
$\bullet$
{\bf Case 2:} $s  =  - \frac  d 2$.
\quad In this case, we set
\begin{align}
A = \frac{N^{\frac 1 d}}{(\log N )^{\frac 1{16d}}}, 
\quad R = 1, 
\quad \text{and} 
\quad
T = \frac 1{N^2 (\log N)^{\frac 18}} .
\label{Z2a}
\end{align}

\noi
As before, 
the conditions (v) and (vi) are trivially satisfied for $N \gg 1$.
By choosing 
$N = N(n)$ sufficiently large, we have
\begin{align*}
R A^\frac{d}{2} N^{-\frac d2} 
& = N^{\frac{1}{2}(1-d)}(\log N)^{-\frac{1}{32}} \ll \tfrac 1n, 
\\
TR^2 A^{2d} & = (\log N)^{-\frac 14} \ll 1, \\
TR^3 A^{2d} (\log A)^\frac{1}{2}& \sim
(\log N)^{-\frac 14} 
\big(\log N - \tfrac 1{16}\log \log N\big)^{\frac 12} 
\sim (\log N)^{\frac 14} 
 \gg n, 
\end{align*}

\noi
verifying the conditions (i), (ii), and (iii), respectively.

\medskip

\noi
$\bullet$
{\bf Case 3:} $ - \frac  d 2 < s < 0$.
\quad 
Recall that this case is relevant only for $d \geq 2$.
We set
\begin{align}
A = N^{\frac 2 d -\dl}, 
\quad 
R = N^{-1-s + \frac d 2\dl - \theta}, 
\quad \text{and} 
\quad
T= N^{ - 2 +2s + d \dl + \theta}, 
\label{Z3}
\end{align}

\noi
where $\dl \gg  \theta  > 0$ are sufficiently small
such that 
\begin{align} 
-2 s >d \dl + \theta
\qquad 
\text{and}
\qquad   -s \dl > 2 \theta.
\label{Z4}
\end{align}

\noi
It follows from \eqref{Z3} and \eqref{Z4}
that the conditions (v) and (vi) are  satisfied for $N \gg 1$.
Moreover, by choosing 
$N = N(n)$ sufficiently large, we have
\begin{align*}
R A^\frac{d}{2} N^s & = N^{-\theta}\ll \tfrac 1n,
\\
TR^2 A^{2d} & = N^{-\theta} \ll 1, \\
TR^3 A^{2d}\cdot A^{\frac d2 + s} & = N^{(\frac{-d + 2}{d})s - 2\theta - s\dl}
\geq N^{ - 2\theta - s\dl}
 \gg n, 
\end{align*}

\noi
verifying the conditions (i), (ii), and (iii), respectively.

\begin{remark}\label{REM:Wick2} \rm

In the following, we briefly discuss
necessary modifications
in proving Theorem \ref{THM:1}
for the Wick ordered NLS \eqref{NLS1} on $\T^d$.
We first define 
 the Duhamel integral  operator 
 $\wt \I$ 
 adapted to the renormalized nonlinearity in \eqref{NLS1}
 by setting
\begin{align*}
\wt \I[u_1, u_2, u_3](t)
= i 
\int_0^t S(t - t') \NN[u_1(t'), u_2(t'), u_3(t')] dt', 
\end{align*}

\noi
where $\NN$ is a trilinear operator given by 
\begin{align}
\F\big( \NN[f_1, f_2, f_3]\big)(\xi) 
= \sum_{\substack{\xi = \xi_1 - \xi_2 +  \xi_3\\ \xi \ne \xi_1, \xi_3}}
 \ft f_1(\xi_1) & \cj{\ft f_2(\xi_2)}\ft f_3(\xi_3) \notag\\
& -\ft f_1(\xi)\cj{\ft f_2(\xi)}\ft f_3(\xi).
\label{Z5}
\end{align}

\noi
Then, 
one can go through
Section \ref{SEC:2}
by replacing $\I$ with  $\wt \I$.
In particular, 
Lemmas \ref{LEM:nonlin1} and \ref{LEM:nonlin2} hold
with $\wt \I$ in place of $\I$.
This essentially follows from 
the following simple observation:
\begin{align*}
\big|\F\big( \NN[f_1, f_2, f_3]\big)(\xi) \big|
\leq
\sum_{\xi = \xi_1 - \xi_2 +  \xi_3}
|\ft f_1(\xi_1)||\ft f_2(\xi_2)||\ft f_3(\xi_3)|.
 \end{align*}

\noi
As a consequence, Lemmas \ref{LEM:Hs1} and \ref{LEM:Hs2}
also hold 
with $\wt \I$ in place of $\I$.
Lastly, let us consider Proposition \ref{PROP:Hs3}
in this case. 
Let $\phi_n$ be as in \eqref{phi1}, 
where $R = R(N)$ and $A = A(N)$ are as in 
\eqref{Z2}, \eqref{Z2a}, or \eqref{Z3}
with $N \gg 1$.
In particular, we have $1 \ll A \ll N$.
Then, 
it follows 
from \eqref{Z5} and \eqref{phi1}
that
\begin{align*}
\F\big( \NN[\phi_n, \phi_n, \phi_n]\big)(\xi) 
= \sum_{\xi = \xi_1 - \xi_2 +  \xi_3}
\ft \phi_n (\xi_1)\cj{\ft \phi_n(\xi_2)}\ft \phi_n(\xi_3)
= \F\big(|\phi_n|^2 \phi_n\big)(\xi)
\end{align*}

\noi
for $\xi \in Q_A$.
Hence, \eqref{Hs35} holds.
As a result, 
 Proposition \ref{PROP:Hs3}
also holds  for $\wt \I$.
The discussion in Subsection \ref{SUBSEC:3.2} holds
without any change.

\end{remark}

\begin{ackno}\rm
T.O.~was supported by the European Research Council (grant no.~637995 ``ProbDynDispEq'').
T.O.~is grateful to  Nobu Kishimoto
for  explaining  his work in \cite{Kishimoto}.
He would also like to thank Nikolay Tzvetkov
for a conversation on Remark \ref{REM:prob}.

\end{ackno}

\end{document}